\subjclass{ 37G25; 37B10; 37B40; 37C20}
\keywords{Topological entropy, partially hyperbolic, Anosov flow}
\theoremstyle{plain}
\newtheorem{main}{Theorem}
\newtheorem{Thm}{Theorem}[section]
\newtheorem{Lem}[Thm]{Lemma}
\newtheorem{Prop}[Thm]{Proposition}
\newtheorem{Cor}[Thm]{Corollary}
\theoremstyle{remark}
\newtheorem{Def}[Thm] {Definition}
\newtheorem{Rem}[Thm] {Remark}
\newtheorem{Con}{Conjecture}
\long\def\begcom#1\endcom{}
\newcommand{\quand}{\quad\text{and}\quad}
\def\cU{{\mathcal U}}
\begin{document}

\title[Continuity of topological entropy]
{Continuity of topological entropy \\ for perturbation of time-one maps of \\ hyperbolic flows}

\author{Radu Saghin and Jiagang Yang}

\thanks{Instituto de Matem\'atica, Pontific\'ia Universidad Cat\'olica de Valpara\'iso, Blanco Viel 596, Valpara\'iso, Chile}
\thanks{Departamento de Geometria, Instituto de Matem\'atica e Estat\'istica, Universidade Federal Fluminense,
Niter\'oi, Brazil}

\email{rsaghin@gmail.com, yangjg@impa.br}

\date{today}

\thanks{RS was partially supported by Anillo CONYCIT ACT-1103 and FONDECYT regular 1130611; JY was partially supported by CNPq,
FAPERJ and Palis-Balzan project}

\begin{abstract}
We consider a $C^1$ neighborhood of the time-one map of a hyperbolic flow and prove that the topological entropy varies continuously for diffeomorphisms in this neighborhood.
This shows that the topological entropy varies continuously for all known examples of partially hyperbolic diffeomorphisms with one-dimensional center bundle.
\end{abstract}

\maketitle


\section{Introduction}\label{s.introduction}

In this paper we are addressing the problem of continuity of the topological entropy on the set of partially hyperbolic diffeomorphisms with the dimension of the center equal to one, and in particular for perturbations of time-one maps of hyperbolic flows. The main theorem which we obtain is the following.

\begin{main}\label{t.mainA}

Let $\phi_t$ be a $C^1$ Anosov flow on a compact Riemannian manifold $M$. There exists a $C^1$ neighborhood $\cU$ of $\phi_1$, in the set of $C^1$ diffeomorphisms of $M$, such that the topological entropy varies continuously in $\cU$.  
\end{main}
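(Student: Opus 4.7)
I would prove both upper and lower semicontinuity of $h_{top}$ at every $f$ in a suitably small $C^1$-neighborhood $\cU$ of $\phi_1$. Since $\phi_1$ is partially hyperbolic with one-dimensional center (the flow direction $\RR\cdot X$, where $X$ generates $\phi_t$), I would first shrink $\cU$ so that every $f \in \cU$ is partially hyperbolic with a continuous invariant splitting $TM = E^s_f \oplus E^c_f \oplus E^u_f$, $\dim E^c_f = 1$, and so that the stable and unstable bundles integrate to continuous $f$-invariant foliations $\cW^{s,u}_f$ depending continuously on $f$. On such a neighborhood the unstable Jacobian $\log|\det(Df|_{E^u_f})|$ and the center conorm $\log\|Df|_{E^c_f}\|$ are $C^0$-continuous in $f$.

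\textbf{Lower semicontinuity.} At $f_0 \in \cU$, given $\epsilon>0$, I would produce a uniformly hyperbolic basic set $\Lambda$ for $f_0$ with $h_{top}(f_0|_\Lambda) > h_{top}(f_0) - \epsilon$. For $f_0 = \phi_1$ this is classical via Markov partitions of the Anosov flow. For a general $f_0 \in \cU$ one would adapt Katok's horseshoe-approximation to the $C^1$ setting, which is accessible here thanks to the uniform hyperbolicity along $E^s_f \oplus E^u_f$ (thereby bypassing the usual $C^{1+\alpha}$ requirement). Since hyperbolic basic sets are $C^1$-persistent and their entropies depend continuously on the diffeomorphism, one obtains $\liminf_{g \to f_0} h_{top}(g) \ge h_{top}(f_0)$.

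\textbf{Upper semicontinuity --- the main obstacle.} The strategy is to control $h_{top}(f)$ by a continuous proxy, namely the unstable topological entropy $h^u(f)$, defined as the exponential growth rate of the $f^n$-iterated volume of a fixed unstable disk. Standard results give $h^u(f) \le h_{top}(f)$, and $h^u$ is continuous on $\cU$ since $\cW^u_f$ and the unstable Jacobian are $C^0$-continuous in $f$. At $\phi_1$ the Ruelle inequality is tight because the center Lyapunov exponent vanishes identically, so $h_{top}(\phi_1) = h^u(\phi_1)$. The hard step is to promote this identity to all $f \in \cU$: one must bound the defect $h_{top}(f) - h^u(f)$ uniformly across the neighborhood. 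I would split the ergodic measures of $f$ by the sign of their center Lyapunov exponent, absorbing positive-center-exponent contributions into $h^u$ via Pesin center-unstable manifolds in $E^c_f \oplus E^u_f$, and arguing that nonpositive center exponents contribute nothing. The one-dimensionality of $E^c_f$ together with the $C^0$-smallness of $\log\|Df|_{E^c_f}\|$ on $\cU$ should yield $h_{top}(f) = h^u(f)$ throughout $\cU$, whence continuity of $h_{top}$ follows from continuity of $h^u$.
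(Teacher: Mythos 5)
Your plan breaks down at both halves, and the first failure is structural rather than technical. \emph{Lower semicontinuity via horseshoes cannot work here.} The time-one map $\phi_1$ of an Anosov flow admits \emph{no} uniformly hyperbolic invariant set whatsoever: the flow direction $E^c$ is $D\phi_1$-invariant and, since $D\phi_t X = X\circ\phi_t$ with $\|X\|$ bounded above and below, it is neither uniformly contracted nor uniformly expanded on any compact invariant set; by domination it cannot be absorbed into the stable or unstable bundle of a putative hyperbolic splitting. Equivalently, every invariant measure of $\phi_1$ has a zero center exponent, so there are no hyperbolic measures to feed into Katok's theorem, and the same persists for all $f\in\cU$ (the center is non-expanding in both time directions, Lemma~\ref{nonexpandingcenter}). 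Markov partitions give basic sets that are hyperbolic \emph{for the flow}, not for the time-one map. This is exactly why the paper announces that its proof of lower semicontinuity avoids horseshoes ``because in our case they may not exist.'' The paper's substitute is Lemma~\ref{uniformexpanding}: a \emph{uniform, finite-time} expansion statement --- every unstable $\delta$-disk centered in $\Lambda_f$ covers, after $N(\epsilon)$ iterates, at least $e^{N(\epsilon)(h(f)-3\epsilon)}$ disjoint unstable $2\delta$-disks --- obtained by transporting a single entropy-carrying unstable disk around $\Lambda_f$ using the globally defined center-stable holonomy (Propositions~\ref{centerholonomy}--\ref{centerunstabledense}) and the reduction of entropy to unstable disks (Proposition~\ref{unstableentropy}). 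Being a finite-time open condition, this survives $C^1$ perturbation and iterates to give $h(g)\ge h(f)-3\epsilon$ for all nearby $g$.

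Your upper-semicontinuity argument has two gaps. First, the assertion that $h^u$ is continuous on $\cU$ ``since $\cW^u_f$ and the unstable Jacobian are $C^0$-continuous in $f$'' is a non sequitur: $h^u$ is a $\limsup$ as $n\to\infty$ of quantities depending on $f$, and pointwise convergence of the finite-time data does not pass to the limit. Lower semicontinuity of unstable volume growth is precisely the hard content of the problem (it is what Lemma~\ref{uniformexpanding} quantifies uniformly), so as written your argument is circular; upper semicontinuity of volume growth in the $C^1$ category is likewise not free (Yomdin-type results require higher regularity). Second, the proposed use of Pesin center-unstable manifolds to absorb positive center exponents is unavailable for $C^1$ diffeomorphisms. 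The paper sidesteps all of this by invoking the theorem of Liao--Viana--Yang \cite{LVY}, which gives upper semicontinuity of entropy for $C^1$ partially hyperbolic diffeomorphisms with one-dimensional center; if you do not want to cite that result you must supply a genuinely $C^1$ argument, which your sketch does not.
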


The topological entropy, together with the metric entropy, is an important invariant of a dynamical system (see the definition in the following section), which describes the complexity of the orbits. The dependence of the topological entropy on the map was extensively studied by various mathematicians (Newhouse, Yomdin, Katok, etc.). There are several important factors which contribute to this dependence, namely the manifold, and the space of maps which is considered.

Let us remark that in the space of continuous maps, generically the topological entropy is infinity (\cite{Yano}), so it make sense to study the problem of continuity at least in the $C^1$ topology.

The upper semicontinuity of the topological entropy seems to depend on the regularity of the maps and on the existence of homoclinic tangencies. By Yomdin \cite{Yomdin} (see also \cite{Buzzi}), the topological entropy is upper semicontinuous in the space of $C^{\infty}$ diffeomorphisms. And by Misiurewicz \cite{Mis73}, the topological entropy is not upper semicontinuous in the $C^r$ topology, for any $r<\infty$. The examples where the upper semicontinuity fails are diffeomorphisms with homoclinic tangencies; an arbitrary $C^r$ small perturbation can create a horseshoe at the homoclinic orbit, such that the topological entropy increases (proportionally with $\frac 1r$). We note that, by the result of Liao, Viana and Yang \cite{LVY}, the topological entropy is in fact upper semicontinuous on the space of $C^1$ diffeomorphisms away from tangencies.

The lower continuity of the topological entropy seems to depend on the existence of horseshoes or other types of invariant sets which are robust and ``carry'' the entropy (for example uniformly hyperbolic sets), and in many cases this depends on the dimension of the manifold $M$. In dimension one, the topological entropy is lower semicontinuous on the space of $C^0$ maps and locally constant (zero) on the space of  homeomorphisms, while in dimension two it is lower semicontinuous on the space of $C^{1+\alpha}$ diffeomorphisms, by a result of Katok (\cite{Katok}). The reason for these results is that in lower dimensions the topological entropy is due to the existence of horseshoes, and these horseshoes are persistent after small perturbations. In dimension greater or equal to three one can easily construct examples of $C^{\infty}$ diffeomorphisms where the lower semicontinuity of the topological entropy fails, using partially hyperbolic invariant sets with positive entropy that can disappear after an arbitrarily small perturbation.

Of course, the classical example of diffeomorphisms which are both away from tangencies and have persistent horseshoes are the uniformly hyperbolic maps, in this case the topological entropy is not only continuous, it is locally constant in the $C^1$ topology due to the structural stability. The partially hyperbolic diffeomorphisms, as a generalization of uniformly hyperbolic maps,
were proposed by Brin, Pesin~\cite{BP74} and by Pugh, Shub~\cite{PS72} independently in the early 1970's. A diffeomorphism 
$f$ is {\it partially hyperbolic}, if there exists a decomposition $TM = E^s \oplus E^c \oplus E^u$
of the tangent bundle $TM$ into three continuous sub-bundles $x\mapsto E^s_x$, $x\mapsto E^c_x$ and $x\mapsto E^u_x$ such that
\begin{itemize}
\item all three sub-bundles $E^s, E^c$ and $E^u$ are invariant under the derivative $Df$;
\item $Df \mid E^s$ is a uniform contraction, $Df\mid E^u$ is a uniform expansion and $Df \mid E^c$ lies in between them: there exists $\mu<1$ such that
$$
\frac{\|Df(x)v^s\|}{\|Df(x)v^c\|} \le \mu
\quand
\frac{\|Df(x)v^c\|}{\|Df(x)v^u\|} \le \mu
$$
for any unit vectors $v^s\in E^s$, $v^c\in E^c$ and $v^u\in E^u$ and any $x\in M$.
\end{itemize}
Partially hyperbolic diffeomorphism form an open subset of the space of $C^r$ diffeomorphisms of $M$, for any $r\ge 1$.

A natural question to ask is how is the dependence of the topological entropy on the space of partially hyperbolic diffeomorphisms with the dimension of the center equal to one or two, since the topological entropy is constant zero for homeomorphisms in dimension one and continuous for $C^{\infty}$ diffeomorphisms in dimension two. An example of a $C^{\infty}$ partially hyperbolic diffeomorphisms with 2-D center where the topological entropy is not lower semicontinuous can be found in \cite{HSX} (an invariant set with high entropy can disappear after an arbitrarily small perturbation). Thus, we will focus on the partially hyperbolic diffeomorphisms with one dimensional center, and a natural conjecture formulated in this setting is the following.

\begin{Con}\label{c.1d}

The topological entropy is continuous on the space of $C^1$ partially hyperbolic diffeomorphisms with the dimension of the center equal to one.

\end{Con}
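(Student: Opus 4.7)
The plan is to derive continuity of the topological entropy on $\cU$ by establishing upper and lower semicontinuity separately, exploiting that $\phi_1$ is partially hyperbolic with one-dimensional center (the flow direction) and that partial hyperbolicity is $C^1$-open. First shrink $\cU$ so that every $f\in\cU$ is partially hyperbolic with a continuously varying splitting $TM = E^s_f\oplus E^c_f\oplus E^u_f$, where $\dim E^c_f=1$ and $\log\|Df|_{E^c_f}\|$ is uniformly close to $0$ on $M$.

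For upper semicontinuity I would start from Ruelle's inequality: for any $f\in\cU$ and any $f$-invariant probability $\mu$,
\[
h_\mu(f) \le \int \log\bigl|\det Df|_{E^u_f}\bigr|\, d\mu + \int \max\bigl(0,\log\|Df|_{E^c_f}\|\bigr)\, d\mu .
\]
Both integrands are jointly continuous in $(f,x)$ for $f\in\cU$, and on $\cU$ the second integral is uniformly small. Given $f_n\to f$ in $\cU$ together with $f_n$-invariant measures $\mu_n$ satisfying $h_{\mu_n}(f_n) \ge h_{\mathrm{top}}(f_n)-1/n$, a weak-$*$ accumulation point $\mu$ is $f$-invariant; applying the inequality in its subadditive form, with $\tfrac{1}{N}\log|\det Df^N|_{E^u_f}|$ in place of $\log|\det Df|_{E^u_f}|$ and letting $N\to\infty$, one extracts $\limsup_n h_{\mathrm{top}}(f_n)\le h_{\mathrm{top}}(f)$.

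For lower semicontinuity the plan is to approximate $h_{\mathrm{top}}(f)$ from below by entropies of uniformly hyperbolic horseshoes that are $C^1$-persistent. Given $\varepsilon>0$, pick an ergodic measure $\mu$ with $h_\mu(f)>h_{\mathrm{top}}(f)-\varepsilon$. Since $f$ is partially hyperbolic, $\mu$ has uniformly negative exponents on $E^s_f$ and uniformly positive exponents on $E^u_f$, with only the one-dimensional center exponent possibly near zero. A horseshoe extraction adapted to this situation---intersecting Pesin-type blocks with genuinely long pieces of the uniformly contracted and expanded leaves of $\cW^s_f$ and $\cW^u_f$---produces a uniformly hyperbolic horseshoe $\Lambda\subset M$ for $f$ with $h_{\mathrm{top}}(f|_{\Lambda})>h_{\mathrm{top}}(f)-2\varepsilon$. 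Structural stability of $\Lambda$ then yields a continuation of the same entropy in a $C^1$ neighborhood of $f$, so $\liminf_{g\to f} h_{\mathrm{top}}(g) \ge h_{\mathrm{top}}(f)-2\varepsilon$, and letting $\varepsilon\to 0$ completes the argument.

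The main obstacle I anticipate is constructing the approximating horseshoes within the $C^1$ category, since Katok's classical theorem uses $C^{1+\alpha}$ regularity in an essential way. The key to circumventing this should be that uniform hyperbolicity already holds along $E^s_f\oplus E^u_f$, so Pesin graph-transform techniques and non-uniform invariant manifolds are only needed (if at all) in the one-dimensional center direction, where the exponent is close to zero and the local dynamics is nearly isometric, essentially a perturbation of a flow translation. Treating this slow direction carefully---for instance via a cross-sectional argument transverse to the nearly-flow-like center, in which shadowing and specification-type properties inherited from $\phi_t$ produce the symbolic coding needed---without invoking higher regularity is the technical core on which the whole argument hinges.
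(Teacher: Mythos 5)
First, note that the statement you were asked about is stated in the paper as a \emph{conjecture}: the paper does not prove continuity on the whole space of $C^1$ partially hyperbolic diffeomorphisms with one-dimensional center, only on a $C^1$ neighborhood $\cU$ of the time-one map $\phi_1$ of an Anosov flow (Theorem \ref{t.mainA}). Your proposal is implicitly pitched at that special case (you use that the center derivative is near $1$ and that the dynamics is ``nearly flow-like''), so at best it addresses Theorem \ref{t.mainA}, not the conjecture.

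Even for Theorem \ref{t.mainA}, your lower semicontinuity strategy has a fatal gap: uniformly hyperbolic horseshoes carrying almost all of the entropy simply do not exist here, and the paper says so explicitly in the introduction. If $\Lambda$ were a compact invariant set of $\phi_1$ with a uniformly hyperbolic splitting $T_\Lambda M=E^s\oplus E^u$, decompose the flow vector field $X=X^s+X^u$; since $\|D\phi_1^n(x)X(x)\|=\|X(\phi_1^n(x))\|$ is bounded above and below for all $n\in\ZZ$, both components must vanish, forcing $X=0$ on $\Lambda$ --- impossible for an Anosov flow. Equivalently, the measures of near-maximal entropy for $\phi_1$ (and for its perturbations in $\cU$) have center Lyapunov exponent equal to zero, not merely ``close to zero,'' so no Katok/Pesin-type extraction can produce a hyperbolic subsystem with entropy close to $h(f)$, in \emph{any} regularity class; the $C^1$ versus $C^{1+\alpha}$ issue you flag is not the real obstruction. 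Your fallback (``a cross-sectional argument transverse to the nearly-flow-like center, with shadowing and specification'') is not an argument: a perturbation $f$ of $\phi_1$ is not the time-one map of any flow, and there is no global cross-section or return map to code. The paper's actual mechanism is entirely different: it reduces $h(f)$ to the entropy of a single small unstable disk (Proposition \ref{unstableentropy}, using the one-dimensional non-expanding center), uses density of $W^c_{K_0}(W^u(x,f),f)$ in each basic piece (Proposition \ref{centerunstabledense}) together with globally defined center-stable holonomies to transport an $(n,\rho)$-separated set from that disk into the iterate of \emph{every} unstable disk (Lemma \ref{uniformexpanding}), and observes that this uniform ``doubling'' statement, being quantified over a finite time $N(\epsilon)$, persists under $C^1$ perturbation. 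Separately, your upper semicontinuity sketch is also not closed: after passing to the limit in Ruelle's inequality you obtain $\limsup_n h(f_n)\le\int\log|\det Df|_{E^u_f}|\,d\mu+\epsilon$, but to conclude you still need the reverse bound $h(f)\ge\int\log|\det Df|_{E^u_f}|\,d\mu$ for the limit measure (a Pesin-type lower bound that is false for general $C^1$ systems and requires the unstable-volume-growth estimates for partially hyperbolic maps); the paper sidesteps all of this by citing the Liao--Viana--Yang theorem on upper semicontinuity away from tangencies.
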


An easier version would be to restrict the attention to the space of three dimensional manifolds, where the stable, center,
and unstable bundle of a partially hyperbolic diffeomorphism are automatically one dimensional.

\begin{Con}\label{c.continuiation}

The topological entropy depends continuously in the space of 3 dimensional partially hyperbolic diffeomorphisms.

\end{Con}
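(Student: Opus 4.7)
My plan is to combine Theorem~\ref{t.mainA} with the partial classification of three-dimensional partially hyperbolic diffeomorphisms (due to Bonatti--Wilkinson, Hammerlindl--Potrie, and Rodriguez Hertz--Rodriguez Hertz--Ures, among others) and carry out a case analysis. On a closed $3$-manifold, a partially hyperbolic diffeomorphism automatically has one-dimensional $E^s$, $E^c$ and $E^u$. Up to finite covers and $C^0$ isotopy, all currently known dynamically coherent examples fall in one of three families: (i) perturbations of hyperbolic automorphisms of $\TT^3$ or of nilmanifolds, (ii) skew products over an Anosov diffeomorphism of a surface, and (iii) $C^1$-perturbations of the time-one map of an Anosov flow on a closed $3$-manifold. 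I would first establish continuity of the topological entropy separately on each family, and then check that membership in each family is $C^1$-robust, so that the three local statements glue into a global one.

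For family (i) structural stability of Anosov diffeomorphisms makes the entropy locally constant. For family (iii) Theorem~\ref{t.mainA} applies directly. The intermediate family (ii) is where genuine work is needed: using the leaf conjugacy of Hammerlindl--Potrie one reduces a perturbation to an honest skew product over a surface Anosov map $g$, and a Ledrappier/Bowen-type formula expresses $h(f)$ as $h(g)$ plus a fibre-entropy term. Since the center is one-dimensional and, after isotopy, the center dynamics on each fibre is conjugate to a rotation of the circle, the fibre-entropy term vanishes, leaving $h(f)=h(g)$. The right-hand side is locally constant by Katok's theorem in dimension two (surface Anosov maps are $C^{1+\alpha}$).

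In my view the main obstacle is two-fold. \emph{First}, the classification is not yet complete: the non-dynamically coherent examples of Rodriguez Hertz--Rodriguez Hertz--Ures on $\TT^3$, together with any potential future exotic construction, must be handled intrinsically, presumably by exploiting quasi-isometry of center leaves in the universal cover to rule out any center contribution to the entropy. \emph{Second}, one must verify that the trichotomy is $C^1$-open, so that a perturbation of a time-one map cannot secretly reappear as a perturbation of a linear Anosov with a different entropy value at the limit. This stability-of-classification step is arguably the crux, and it may itself require a strengthening of Theorem~\ref{t.mainA} in which the Anosov flow $\phi_t$ is also allowed to vary and the neighborhood $\cU$ is chosen uniformly on compact families of flows, i.e.\ a joint continuity statement for the pair $(\phi_t,f)$ with $f$ close to $\phi_1$ in the $C^1$ topology.
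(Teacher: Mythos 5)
The statement you are trying to prove is a \emph{conjecture} in the paper, not a theorem: the authors explicitly state that Conjectures \ref{c.1d} and \ref{c.continuiation} ``are known to be true for all the known examples,'' and the paper's contribution (Theorem \ref{t.mainA} plus the Appendix) is precisely the case-by-case verification for those known families. Your proposal follows the same outline as the paper's Introduction and Appendix --- split into uniformly hyperbolic maps, skew products, derived-from-Anosov maps, the non-dynamically coherent examples, and perturbations of time-one maps of Anosov flows --- but it cannot be upgraded to a proof of the conjecture, for the reason you yourself flag: there is no classification theorem saying that every $3$-dimensional partially hyperbolic diffeomorphism belongs to one of these families, nor that membership in each family is $C^1$-open. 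Both of these are open problems, so the ``glue the local statements into a global one'' step has no foundation. Your proposal is therefore a restatement of the conjecture's supporting evidence, not a proof.

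Two smaller but genuine errors in the case analysis. First, for the derived-from-Anosov family you invoke ``structural stability of Anosov diffeomorphisms,'' but a derived-from-Anosov diffeomorphism is only \emph{isotopic} to a linear Anosov map and is not itself Anosov, so structural stability does not apply; the correct argument (used in the paper, citing Ures and Buzzi--Fisher--Sambarino--V\'asquez) is that its entropy coincides with that of its linear part, which is a topological invariant of the isotopy class and hence locally constant. Second, for the skew-product family your claim that the fibre dynamics is ``conjugate to a rotation'' and hence contributes zero entropy is not how the vanishing of the fibre term is obtained; the paper uses the Ledrappier--Walters relativised variational principle, which gives $h(f)=h(g)$ directly because the fibres are one-dimensional compact center leaves. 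Finally, the non-dynamically coherent examples of Rodriguez Hertz--Rodriguez Hertz--Ures are handled in the paper by observing they are Axiom A with a torus attractor and repeller and applying $\Omega$-stability --- a concrete argument, rather than the speculative quasi-isometry approach you sketch. None of these repairs, however, addresses the fundamental gap: absent a complete and $C^1$-robust classification, the conjecture remains open.
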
 

We remark that these conjectures are known to be true for all the known examples of partially hyperbolic diffeomorphisms with one
dimensional center or in dimension three. A list with the known partially hyperbolic diffeomorphisms with one dimensional center is the following:
\begin{enumerate}
\item
uniformly hyperbolic diffeomorphisms
\item
skew products over uniformly hyperbolic, with the fiber being a circle, and perturbations (\cite{BW, P, H13b,HP14})
\item
derived from Anosov diffeomorphisms (\cite{M,HP14,Potrie})
\item
non-dynamically coherent examples of Hertz-Hertz-Ures \cite{HHU},
\item
time-one maps of Anosov flows and perturbations
\item
new examples by Bonatti-Parwani-Potrie \cite{BPP}
\end{enumerate}

In the first 4 examples, the topological entropy is locally constant. In the cases (5) and (6),
the topological entropy is not locally constant. In fact, in case (5), let $X_t$ be a $C^{\infty}$ Anosov flow, and considering the smooth family of time-$t$ diffeomorphisms 
$f_t=X_t$, we observe that $h(f_t)=th(X)$ is not constant. The diffeomorphism in case (6) has a unique attracting set and a unique expanding set, and restricted to the attracting set (resp. expanding set) the dynamic is that of (a perturbation of) the time-one map of a hyperbolic flow restricted to an attractor (resp. repeller), hence the discussion is similar to the case (5) from above for time-one maps of Anosov flows. More details on case (6) will be given in Appendix.
The continuity of the topological entropy at the time-one map of a transitive Anosov flow was proven by Hua in her thesis \cite{Hua}, and for some specific perturbations by Hu-Zhu \cite{HZ}; however, they only show the continuity at one point (the time-one map of the flow), and not for a neighborhood. The result of our paper deals with the case of perturbations of the time-one maps of Anosov flows, and similar arguments can be used also in the case of Bonatti-Parwani-Potrie examples, which will be explained in Appendix, thus completing all the known cases mentioned above.

Our proof of the  lower semicontinuity of topological entropy is different from the method used in most other papers, namely the construction of horseshoes, because in our case they may not exist. The main idea of our paper (and the most difficult part) is to show that the unstable foliation has uniform expansion on every unstable disk. Under these conditions of uniformity one can show that this expansion cannot drop much after small perturbations, and the connection with the topological entropy will then help us get the final conclusion. In this last argument the fact that the center bundle has dimension one is essential.

The paper is organized as follows. In Section 2 we introduce the definitions and various preliminary results, while in Section 3 we give the proof of Theorem \ref{t.mainA}. In the Appendix we include a discussion on the other known examples of partially hyperbolic diffeomorphisms with the dimension of the center equal to one.

\section{Definitions and Preliminary results}

\subsection{Time-one maps of Anosov flows and perturbations}

We start by introducing the notion of hyperbolic flow and the spectral decomposition.

\begin{Def}
A $C^1$ flow $\phi$ on the compact Riemannian manifold $M$ is {\it Anosov} (or {\it uniformly hyperbolic}) if there exists a splitting of the tangent bundle $TM=E^s\oplus E^c\oplus E^u$ which is invariant under the flow, $E^c$ is the direction of the flow, and there exist $C,\lambda>0$ such that
$$
\|D\phi_t(x)v^s\|\leq C\exp^{-\lambda t}\| v^s\|,\ \ \forall x\in M,\ t>0,\ v^s\in E^s_x;
$$
$$
\|D\phi_t(x)v^u\|\geq C^{-1}\exp^{\lambda t}\| v^u\|,\ \ \forall x\in M,\ t>0,\ v^u\in E^u_x.
$$

The map $\phi_t:M\rightarrow M$ is the {\it time-$t$ map of the flow $\phi$}.
\end{Def}

We remark that the time-$t$ map of an Anosov flow is a partially hyperbolic diffeomorphism, meaning that the tangent bundle has an invariant splitting $TM=E^s\oplus E^c\oplus E^u$ (in this case the same one of the Anosov flow), and $D\phi_t$ is uniformly contracting on $E^s$, uniformly expanding on $E^u$, and the contraction (expansion) on $E^c$ is strictly smaller than the one on $E^s$ ($E^u$).

The invariant bundles $E^s,E^u,E^c,E^s\oplus E^c$ and $E^u\oplus E^c$ integrate to unique foliations $W^s,W^u,W^c,W^{cs}$ and $W^{cu}$.

\begin{Thm}{\bf (Spectral Decomposition for Flows)}
If $\phi$ is an Anosov flow on the compact manifold $M$, then there exists a decomposition of the non-wandering set of $\phi$, $NW(\phi)=\Lambda_1\cup\Lambda_2\cup\dots\cup\Lambda_k$, where the sets $\Lambda_i$ are mutually disjoint, and each $\Lambda_i$ is a transitive compact invariant set for $\phi$, such that $\Lambda_i=\overline{W^{cs}(x)}\cap\overline{W^{cu}(x)}$, $\forall x\in\Lambda_i$. 
\end{Thm}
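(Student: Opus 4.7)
The plan is to follow the classical Smale spectral decomposition strategy, adapted to the flow setting. My first step is to establish $NW(\phi) = \overline{\Per(\phi)}$ via the Anosov closing lemma for flows: any near-return of a non-wandering point can be shadowed by a genuine periodic orbit, using uniform hyperbolicity of $E^s \oplus E^u$ transverse to the flow, together with a local product structure that allows one to compose stable and unstable holonomies with a small time reparametrization. This reduces the analysis of $NW(\phi)$ to that of periodic orbits.

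Next, I introduce an equivalence relation on periodic orbits: $\gamma_p \sim \gamma_q$ iff $W^{cu}(\gamma_p) \cap W^{cs}(\gamma_q) \neq \emptyset$ at a point off both orbits, and symmetrically. Each such intersection is automatically transverse since $T W^{cu} + T W^{cs} = TM$. Reflexivity and symmetry are clear; transitivity follows from the inclination (lambda) lemma for flows, which says that a disk transverse to $W^{cs}(\gamma_q)$ accumulates on $W^{cu}(\gamma_q)$ in the $C^1$ topology under forward iteration, so chaining two transverse intersections produces a third.

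I then define $\Lambda_i$ as the closure of an equivalence class of periodic orbits. By density of periodic orbits in $NW(\phi)$, one obtains $NW(\phi) = \bigcup_i \Lambda_i$; compactness and $\phi$-invariance are immediate. For transitivity of $\phi|_{\Lambda_i}$, I observe that any two periodic orbits in the class are heteroclinically connected through intersections of their invariant manifolds, and iterating the lambda lemma along such connections together with a Birkhoff-type argument produces a dense orbit in $\Lambda_i$. To separate distinct classes and obtain finiteness, I use the local product structure: each periodic orbit in $\Lambda_i$ has a small neighborhood $U$ with $U \cap \Per(\phi) \subseteq [\gamma_{p_i}]$, because a nearby periodic point inherits the required transversal intersections via stable/unstable holonomies. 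Thus the $\Lambda_i$'s are pairwise separated and compactness of $M$ bounds their number.

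Finally, for the characterization $\Lambda_i = \overline{W^{cs}(x)} \cap \overline{W^{cu}(x)}$ at every $x \in \Lambda_i$: the inclusion $\Lambda_i \subseteq \overline{W^{cs}(x)} \cap \overline{W^{cu}(x)}$ follows from transitivity of $\phi|_{\Lambda_i}$ combined with the standard lambda-lemma fact that $W^{cu}(x) \cap \Lambda_i$ and $W^{cs}(x) \cap \Lambda_i$ are both dense in $\Lambda_i$. The reverse inclusion uses that any point in $\overline{W^{cu}(x)} \cap \overline{W^{cs}(x)}$ is non-wandering and, by the local product structure, lies in the same equivalence class as $x$. I expect the subtlest point to be the careful handling of the flow direction in both the local product structure and the lambda lemma: since $E^c$ is present but not hyperbolic, the transversal intersections must be understood modulo a one-dimensional time ambiguity, which is where the flow setting genuinely differs from the diffeomorphism case.
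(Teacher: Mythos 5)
This theorem is stated in the paper without any proof: it is the classical spectral decomposition for Anosov (more generally Axiom~A) flows, going back to Smale, Pugh--Shub and Bowen, so there is no argument of the authors' to measure yours against. Your outline is the standard one and its architecture is sound: the closing lemma gives $NW(\phi)=\overline{\Per(\phi)}$; the heteroclinic relation on periodic orbits works because every intersection of $W^{cu}$ with $W^{cs}$ is automatically transverse for an Anosov flow; the $\lambda$-lemma gives transitivity of the relation; and the uniform size of the local product structure gives finiteness of the classes and topological transitivity of each $\Lambda_i$.

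Three points need more care, the last being a genuine gap. First, a definitional slip: if you require the heteroclinic intersection to occur at a point off both orbits, reflexivity fails for a basic set consisting of a single closed orbit; the standard relation admits the trivial intersection along the orbit itself. Second, the density of $W^{cu}(x)\cap\Lambda_i$ in $\Lambda_i$ for \emph{every} $x\in\Lambda_i$ is exactly where the flow case differs from the diffeomorphism case: for a non-mixing basic set of a diffeomorphism the strong unstable manifold of a point is dense only in one piece of the cyclic decomposition, and it is the flow-saturation of the weak manifolds $W^{cu}$, $W^{cs}$ that rescues the statement (this is the Bowen/Plante fact that the paper itself exploits in Proposition~\ref{centerunstabledense}); ``transitivity plus the $\lambda$-lemma'' is too quick here. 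Third, the reverse inclusion $\overline{W^{cs}(x)}\cap\overline{W^{cu}(x)}\subseteq\Lambda_i$ is asserted rather than argued: a point $z$ of this intersection need not lie on $W^{cs}(x)\cap W^{cu}(x)$, and a priori it could sit on a heteroclinic connection between \emph{other} basic sets. One must observe that $\alpha(z)\subseteq\Lambda_j$ and $\omega(z)\subseteq\Lambda_k$ for some $j,k$, that the accumulation of $W^{cu}(x)$ on $\Lambda_j$ and of $W^{cs}(x)$ on $\Lambda_k$ produces, via the local product structure, actual transverse connections $\Lambda_i\to\Lambda_j\to\Lambda_k\to\Lambda_i$, and that such a cycle forces $i=j=k$ because all connections are transverse and the $\lambda$-lemma then merges the classes; only then is $z$ a transverse homoclinic point of $\Lambda_i$ and hence non-wandering. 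This no-cycle step is the missing ingredient in your sketch; once supplied, the argument closes.
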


Throughout the paper, we fix the Anosov flow $\phi$ which satisfies the above theorem.
We turn our attention now to perturbations of the time-one map of an Anosov flow. A partially hyperbolic diffeomorphism is dynamically coherent if the center-stable, central and center-unstable bundles integrate to invariant foliations $W^{cs}$, $W^{c}$ and $W^{cu}$
respectively, the center-stable foliation is sub-foliated by the center and the stable foliations, and the center-unstable foliation is sub-foliated by center and unstable foliations.

Since the center foliation is smooth for $\phi_1$, which is plaque expansive, hence, by Theorem 7.1 of Hirsch, Pugh and Shub (see \cite{HPS}), there exists a $C^1$ neighborhood of $\phi_1$, such that every diffeomorphism in this neighborhood is partially hyperbolic with one dimensional center, and is dynamically coherent.

\begin{Prop}\label{centerconjugation}[\cite{HPS}[Theorem 7.1]]

There exists a neighborhood $\cU$ of $\phi_1$ in the space of $C^1$ diffeomorphisms, such that for each $f\in\cU$, there 
is a homeomorphism $h_f:M\rightarrow M$, which is a leaf conjugacy, meaning that $h_f(W^c(x,\phi_1))=W_f^c(h_f(x),f)$, and with $h_f$
close to $Id$ in the $C^0$ topology, where $W^{*}_{f}$ denotes the $*$ foliation of $f$ ($*=s,u,c,cs,cu$).

\end{Prop}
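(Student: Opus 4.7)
The plan is to derive the proposition as a direct application of Theorem 7.1 of Hirsch, Pugh and Shub on persistence of normally hyperbolic, plaque expansive laminations. Everything reduces to verifying two hypotheses for $\phi_1$: (i) partial hyperbolicity with $W^c$ normally hyperbolic, and (ii) plaque expansiveness of $W^c$. Once these are in hand, HPS produces the neighborhood $\cU$ together with the leaf conjugacy $h_f$, and the $C^0$-closeness of $h_f$ to the identity falls out of the construction.

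For (i), note that the invariant splitting $TM=E^s\oplus E^c\oplus E^u$ of the Anosov flow is preserved by $D\phi_1$. Integrating the Anosov inequalities over the unit time interval gives uniform contraction of $D\phi_1$ on $E^s$ and uniform expansion on $E^u$, while $D\phi_1$ fixes the generating vector field $X$ and is therefore bounded on $E^c=\RR X$. Comparing the rates yields the partial hyperbolicity inequalities for $\phi_1$ with one-dimensional center, and $E^c$ integrates to the smooth orbit foliation $W^c$.

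For (ii), the key remark is that $W^c$ is tangent to the nonvanishing $C^1$ vector field $X$, so it is a smooth one-dimensional foliation. For such a foliation plaque expansiveness is essentially automatic: if two distinct $\epsilon$-pseudoorbits of $\phi_1$ have their iterates contained in common center plaques for all $n\in\ZZ$, then their transverse separation would be preserved up to bounded error under $D\phi_1^n$, contradicting the uniform contraction on $E^s$ and uniform expansion on $E^u$ once $\epsilon$ is small. Hence $W^c$ is plaque expansive.

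With (i) and (ii) verified, HPS Theorem 7.1 provides a $C^1$ neighborhood $\cU$ of $\phi_1$ such that every $f\in\cU$ is partially hyperbolic and dynamically coherent with one-dimensional center foliation $W^c_f$, and leaf conjugate to $\phi_1$ via a homeomorphism $h_f$ with $h_f(W^c(x,\phi_1))=W^c_f(h_f(x),f)$. The $C^0$-closeness $h_f\to\id$ as $f\to\phi_1$ is part of the HPS output, since $h_f$ is constructed as the fixed point of a graph transform that depends continuously on $f$ and coincides with the identity at $f=\phi_1$. The only step with any real content is the verification of plaque expansiveness; smoothness of the orbit foliation makes even this routine, so the substance of the proposition is absorbed into the HPS black box.
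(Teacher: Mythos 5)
Your proposal is correct and follows essentially the same route as the paper, which likewise gives no independent argument beyond observing that $\phi_1$ is partially hyperbolic, that its center foliation is the smooth orbit foliation of the flow and hence plaque expansive, and then invoking Theorem 7.1 of Hirsch--Pugh--Shub to obtain $\cU$, the leaf conjugacy $h_f$, and its $C^0$-closeness to the identity. The only substantive verification in either version is plaque expansiveness, which both you and the paper reduce to the smoothness of the center foliation.
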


It is easy to see that the product structure of each basic set is preserved by the leaf conjugacy: the center stable leaves and center
unstable leaves are mapped to the corresponding cente stable leaves and center unstable leaves.

\begin{Cor}\label{productstructure}

For any $f\in\cU$ and any $1\leq i\leq k$, $h_f(\Lambda_i)$ has local product structure, meaning that there exists $\beta_0>0$ such that if $x,y\in h_f(\Lambda_i)$, $d(x,y)<\beta<\beta_0$, then $W^u_{2\beta}(x,f)\cap W^{cs}_{2\beta}(y,f)$ and $W^s_{2\beta}(x,f)\cap W^{cu}_{2\beta}(y,f)$ consist both of exactly one point which belongs to $h_f(\Lambda_i)$.

\end{Cor}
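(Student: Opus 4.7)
My plan is to combine three ingredients: the local product structure already enjoyed by each basic set $\Lambda_i$ for the Anosov flow $\phi$, the fact (recorded just before the corollary) that the leaf conjugacy $h_f$ sends $W^{cs}$ and $W^{cu}$ of $\phi_1$ to those of $f$, and the $C^0$-continuity of the invariant foliations on $\cU$.

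First I would establish the existence of the intersection points themselves. Since $E^u_f$ and $E^{cs}_f$ are $C^0$-close to $E^u$ and $E^{cs}$ and remain uniformly transverse on $\cU$, and similarly for the pair $E^s_f, E^{cu}_f$, a standard transversality argument produces a uniform $\beta_0>0$ such that, for every $f\in\cU$ and every $x',y'\in M$ with $d(x',y')<\beta<\beta_0$, each of $W^u_{2\beta}(x',f)\cap W^{cs}_{2\beta}(y',f)$ and $W^s_{2\beta}(x',f)\cap W^{cu}_{2\beta}(y',f)$ consists of a single point. No dynamics enters at this stage, only the transverse intersection of submanifolds of complementary dimension.

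The heart of the proof is to show these intersection points lie in $h_f(\Lambda_i)$ when $x',y'\in h_f(\Lambda_i)$. I would set $x=h_f^{-1}(x')$ and $y=h_f^{-1}(y')$, which lie in $\Lambda_i$ and, by the $C^0$-proximity of $h_f$ to the identity, satisfy $d(x,y)<2\beta$. The local product structure of the basic set $\Lambda_i$ for $\phi$ supplies a unique point $w\in W^u_{4\beta}(x,\phi_1)\cap W^{cs}_{4\beta}(y,\phi_1)\subset\Lambda_i$. Let $z'$ denote the corresponding intersection point for $f$. Then $z'\in W^{cs}_{\loc}(y',f)=h_f(W^{cs}_{\loc}(y,\phi_1))$ and $z'\in W^u_{\loc}(x',f)\subset W^{cu}_{\loc}(x',f)=h_f(W^{cu}_{\loc}(x,\phi_1))$. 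Pulling back by $h_f^{-1}$ gives $h_f^{-1}(z')\in W^{cs}_{\loc}(y,\phi_1)\cap W^{cu}_{\loc}(x,\phi_1)$, which is a local center segment of $\phi$ because the center has dimension one (and coincides with the flow direction). Since $w$ lies in the same segment, $h_f^{-1}(z')$ and $w$ sit on a common local center leaf; $\phi$-flow-invariance of $\Lambda_i$ forces $\Lambda_i$ to be saturated by center leaves, and $w\in\Lambda_i$ then yields $h_f^{-1}(z')\in\Lambda_i$, i.e.\ $z'\in h_f(\Lambda_i)$. The symmetric intersection is handled identically.

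The conceptual obstacle I anticipate is that $h_f$ is only a leaf conjugacy: it does \emph{not} send stable (resp.\ unstable) foliations to stable (resp.\ unstable) foliations, only $W^c$, $W^{cs}$, $W^{cu}$ are preserved. The device above circumvents this by promoting the unstable direction to the ambient center-unstable leaf before pulling back by $h_f^{-1}$, and then exploiting the one-dimensionality of the center so that the intersection $W^{cs}_{\loc}\cap W^{cu}_{\loc}$ collapses to a center segment, onto which the flow-invariance of $\Lambda_i$ can be brought to bear.
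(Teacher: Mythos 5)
Your proposal is correct and takes essentially the same route as the paper, which gives no detailed proof and simply invokes the remark preceding the corollary that the leaf conjugacy carries the $cs$- and $cu$-foliations of $\phi_1$ to those of $f$. Your argument is a careful elaboration of exactly that remark: transversality for existence and uniqueness, and then transfer of membership in $\Lambda_i$ via $h_f^{-1}$, using the one-dimensionality of the center to reduce $W^{cs}_{\loc}\cap W^{cu}_{\loc}$ to a flow segment on which the flow-invariance of $\Lambda_i$ applies.
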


By the leaf conjugation, $f\in \cU$ preserves each center leaf. For the flow $\phi$, denote by $I(x)$ the flow segment
$(\phi_t(x))_{t\in[0,1]}$. Then by the continuity, for each $f\in \cU$, $f(x)$ is close to $\phi_1(x)$, and the segment
$I_f(x)\subset W^c(x,f)$ between $x$ and $f(x)$ is close to $I(x)$ in the Hausdorff topology. Moreover, $W^c(x,f)=\cup_{i} f^i(I_f(x))$ and the length of $I_f(x)$ is bounded uniformly from
above and below by some positive numbers $K_1$ and $K_2$, which are independent of $x$.

Let $d_*$ be the distance between two points measured along $W^*$, and let $W^*_{r}(x,f)$ denote the ball of radius $r$ centered at $x$ in the *-manifold of $x$ with respect to $f$, where $*\in\{ s,c,u\}$. 

As a consequence of the above discussion, one can easily show that

\begin{Lem}\label{nonexpandingcenter}

$f$ is not expanding on the center leaves, meaning that there exist $K_1,K_2>0$ such that if $y\in W^c(x,f)$, with $d_c(x,y)\leq lK_1$ for some $l>0$, then
$$
d_c(f^n(x),f^n(y))\leq lK_2,\ \forall n\geq 0.
$$
The same is true for $f^{-1}$.

\end{Lem}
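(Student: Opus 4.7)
The plan is to exploit the observation, stated just before the lemma, that $f$ permutes the ``unit'' center segments $I_f(y)$ and that each such segment has length between $K_1$ and $K_2$. Since $f(I_f(y)) = I_f(f(y))$, iteration merely shifts the discrete partition $\{f^i(I_f(x))\}_{i\in\ZZ}$ of every center leaf by one index. My strategy will therefore be to count how many whole segments separate $x$ and $y$, use the lower length bound to convert this count into the hypothesis $d_c(x,y) \leq lK_1$, and then use the upper length bound to translate it back into a metric estimate after applying $f^n$.

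Concretely, I would first write $y = f^k(z)$ with $z \in I_f(x)$ and, without loss of generality, $k \geq 0$; such a representation exists because $W^c(x,f) = \bigcup_{i\in\ZZ} f^i(I_f(x))$. Next I would estimate
\begin{equation*}
d_c(x,y) \;\geq\; d_c(x, f^k(x)) \;=\; \sum_{j=0}^{k-1}\length(I_f(f^j(x))) \;\geq\; kK_1,
\end{equation*}
so the hypothesis forces $k \leq l$. Applying $f^n$ and noting that $f^n(y) = f^{n+k}(z) \in f^{n+k}(I_f(x)) = I_f(f^{n+k}(x))$, I would then bound
\begin{equation*}
d_c(f^n(x), f^n(y)) \;\leq\; \sum_{j=n}^{n+k}\length(I_f(f^j(x))) \;\leq\; (k+1)K_2 \;\leq\; (l+1)K_2,
\end{equation*}
which yields the claimed inequality after absorbing the ``$+1$'' by a modest enlargement of the constant $K_2$. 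The statement for $f^{-1}$ follows by running exactly the same argument on the $f^{-1}$-orbit of $x$ and the backward segments $f^{-i}(I_f(x))$.

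The main conceptual point, rather than a genuine technical obstacle, is to choose the right viewpoint: no useful pointwise information on $Df|_{E^c_f}$ is available (for $f\in\cU$ the restriction of $Df$ to the center is only $C^0$-close to $D\phi_1|_{E^c}$, so a naive iterated Lipschitz estimate would blow up like $(1+\vep)^n$). What saves us is that $f$ acts as an integer shift on the combinatorial partition of each center leaf determined by the $f$-iterates of a base point, and the uniform two-sided length bounds $K_1 \leq \length(I_f(\cdot)) \leq K_2$ then convert this combinatorial invariance into metric non-expansion up to the constant ratio $K_2/K_1$.
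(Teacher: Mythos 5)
Your proof is correct and is exactly the argument the paper intends: the lemma is stated ``as a consequence of the above discussion,'' i.e.\ of the decomposition $W^c(x,f)=\cup_{i} f^i(I_f(x))$ into segments permuted by $f$ and of uniform length between $K_1$ and $K_2$, which is precisely the segment-counting you carry out (the paper supplies no further proof). The only cosmetic caveat is that absorbing the ``$+1$'' into the constant requires $l$ bounded away from $0$ (say $l\geq 1$), which is the only regime in which the lemma is ever invoked.
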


The following are various properties of the perturbations of the time-one map of an Anosov flow.

\begin{Prop}\label{nonwandering}

The non-wandering set of $f\in\cU$ is contained in $\cup_{i=1}^kh_f(\Lambda_i)$. 
Each set $h_f(\Lambda_i)$ is saturated by center leaves and invariant under $f$.

\end{Prop}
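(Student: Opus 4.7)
The second assertion is immediate from the leaf conjugacy. Each basic set $\Lambda_i$ is $\phi$-invariant, so it is a union of complete $\phi$-orbits, which are exactly the center leaves of $\phi_1$; thus $\Lambda_i$ is saturated by the $\phi_1$ center foliation. Since $h_f$ sends center leaves of $\phi_1$ to center leaves of $f$ (Proposition \ref{centerconjugation}), $h_f(\Lambda_i)$ is a union of center leaves of $f$; combined with the observation following Proposition \ref{centerconjugation} that $f$ preserves each of its own center leaves, this yields $f(h_f(\Lambda_i))=h_f(\Lambda_i)$.

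For the first assertion, I would use the Morse-type filtration naturally provided by the spectral decomposition of $\phi$: compact sets $\emptyset=M_0\subset M_1\subset\cdots\subset M_k=M$ with $\phi_1(M_j)\subset\inter(M_j)$ and $\Lambda_j$ the maximal $\phi$-invariant set in the layer $N_j:=M_j\setminus\inter(M_{j-1})$. The strict inclusion $\phi_1(M_j)\subset\inter(M_j)$ is $C^0$-open on the map, so after possibly shrinking $\cU$ we may assume $f(M_j)\subset\inter(M_j)$ for every $f\in\cU$. A standard Conley-type filtration argument then gives $NW(f)\subset\bigcup_{j=1}^k K_j(f)$, where $K_j(f):=\bigcap_{n\in\ZZ}f^n(N_j)$ is the maximal $f$-invariant set in the $j$-th layer. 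Since $h_f(\Lambda_j)$ is $f$-invariant by the first paragraph and (for $h_f$ close enough to the identity) contained in $N_j$, the inclusion $h_f(\Lambda_j)\subset K_j(f)$ is immediate.

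It remains to prove $K_j(f)\subset h_f(\Lambda_j)$. Take $x\in K_j(f)$; its $f$-orbit lies in $N_j$, and by upper semicontinuity of maximal invariant sets, after further shrinking $\cU$ we may assume $K_j(f)$ is Hausdorff-close to $\Lambda_j$. Because $I_f(x)$ has uniformly bounded length and Lemma \ref{nonexpandingcenter} forbids expansion along center leaves, the whole center leaf $W^c_f(x,f)=\bigcup_{n}f^n(I_f(x))$ stays in a uniform tube around the $f$-orbit of $x$, hence within a preassigned neighborhood of $\Lambda_j$. Applying $h_f^{-1}$, the complete $\phi$-orbit $W^c(h_f^{-1}(x),\phi_1)$ lies in a neighborhood of $\Lambda_j$, and by the local maximality of $\Lambda_j$ as an isolated invariant set of the Anosov flow, this $\phi$-orbit must lie in $\Lambda_j$; hence $h_f^{-1}(x)\in\Lambda_j$. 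The main technical point I expect to have to be careful about is precisely this confinement of the whole center leaf: the bound on $|f^n(I_f(x))|$ supplied by Lemma \ref{nonexpandingcenter} is of \emph{fixed} size and does not shrink with $\cU$, so one must choose $\cU$ small enough relative to the fixed isolating neighborhoods afforded by the Anosov spectral decomposition for the local-maximality argument to apply.
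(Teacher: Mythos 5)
Your proof of the second assertion is fine and matches the paper's (which dismisses it as an easy corollary of Proposition \ref{centerconjugation}). For the first assertion your route is genuinely different from the paper's. The paper argues locally: if $x\notin\cup_i\Lambda_i$, then $x$ has a neighborhood $U$ meeting each flow orbit in a single short connected arc; the leaf conjugacy transports this picture to $h_f(U)$ and the center leaves of $f$, and since each application of $f$ displaces a point along its center leaf by at least the length of $I_f$ (uniformly bounded below), while orbits of $f$ never leave their center leaf, no $f$-orbit returns to $h_f(U)$; hence $h_f(x)\notin NW(f)$. No filtration, no local maximality of the $\Lambda_j$, and no semicontinuity of isolated invariant sets are needed. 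Your filtration scheme is a legitimate alternative (and buys the stronger conclusion that $h_f(\Lambda_j)$ is exactly the maximal invariant set of the $j$-th layer), but it is heavier and, as written, has a gap at its key step.

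The gap is in the confinement argument for $K_j(f)\subset h_f(\Lambda_j)$: the inference ``the whole center leaf stays in a uniform tube around the $f$-orbit of $x$, \emph{hence} within a preassigned neighborhood of $\Lambda_j$'' does not follow, and the remedy you propose does not repair it. The tube radius is of order $K_2$, essentially the length of a time-one flow arc of $\phi$ (up to the distortion of Lemma \ref{nonexpandingcenter}); this is a quantity determined by the flow alone, so shrinking $\cU$ cannot make the $K_2$-neighborhood of $\Lambda_j$ fit inside a fixed isolating neighborhood $V_j$, and local maximality cannot be invoked. The correct fix uses the \emph{direction} of the tube rather than its size: $f^n(I_f(x))=I_f(f^n(x))$ is Hausdorff-close to the flow arc $I(f^n(x))=\{\phi_t(f^n(x))\}_{t\in[0,1]}$, with closeness controlled by $\cU$; since $f^n(x)$ lies within $\epsilon$ of the flow-invariant set $\Lambda_j$, uniform continuity of $\phi$ on $[0,1]$ keeps that arc within a small neighborhood of $\Lambda_j$. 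Only with this observation does the entire center leaf, and hence the entire $\phi$-orbit of $h_f^{-1}(x)$, lie in a preassigned neighborhood of $\Lambda_j$, after which local maximality (which does hold for basic pieces, by the local product structure in the spectral decomposition) gives $h_f^{-1}(x)\in\Lambda_j$. So the architecture is sound, but the step as stated is false without the flow-invariance of $\Lambda_j$ along the tube, and ``choose $\cU$ smaller'' is not the missing ingredient.
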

\begin{proof}
We only need to check that the non-wandering set of $f$ is contained in $\cup_{i=1}^kh_f(\Lambda_i)$, the rest is an easy
corollary of Proposition \ref{centerconjugation}.

Fix $x\notin \cup_{i=1}^k \Lambda_i$, it suffices to show that $h_f(x)$ does not belong to the non-wandering set of $f$.
Because $x$ is wandering, there is a neighborhood $U$ such that for each $y\in U$, its flow orbit intersects $U$ with a unique
connected component. This property is preserved clearly by the leaf conjugacy, that is, for every $h_f(y)\in h_f(U)$,
$W^c_f(h_f(y))$ intersects $h_f(U)$ with a unique connected component with a small length. Then by Lemma \ref{nonexpandingcenter}, 
$f(h_f(y))\notin h_f(U)$. This implies that $h_f(x)$ is a wandering point of $f$.

\end{proof}

\begin{Prop}\label{centerholonomy}
The center holonomy between stable and unstable leaves along center-stable and respectively  center-unstable leaves is globally defined, for every $f\in\cU$: if $y\in W^c(x,f)$, then there exist the continuous holonomies $h^c_s:W^s(x,f)\rightarrow W^s(y,f)$ and $h^c_u:W^u(x,f)\rightarrow W^u(y,f)$ (we will use the notation $h^c$ if no confusion can be made).
\end{Prop}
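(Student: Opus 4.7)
The plan is to build the holonomy $h^c_s$ in three stages: first establish local center holonomy inside each $W^{cs}$-leaf, then chain such local pieces along a center segment to get a holonomy defined on a small stable ball, and finally use stable contraction together with iteration of $f$ to extend to the whole stable leaf.

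By dynamical coherence, each leaf $W^{cs}(p,f)$ is sub-foliated by $W^c$ and $W^s$. Uniform transversality of these sub-foliations and their continuous dependence on $f\in\cU$ yield constants $\delta_0,\eta_0>0$, independent of $p$ and of $f$, such that whenever $q\in W^c(p,f)$ with $d_c(p,q)\le\delta_0$ and $z\in W^s_{\eta_0}(p,f)$, the local center plaque $W^c_{\loc}(z,f)$ meets $W^s_{\eta_0}(q,f)$ in exactly one point; this defines a local center holonomy $h^c_{p\to q}$, continuous in $z$ and Lipschitz with some uniform constant $L=L(\delta_0)$. For an arbitrary $y\in W^c(x,f)$, I would then cover the one-dimensional center segment from $x$ to $y$ inside $W^c(x,f)$ by finitely many consecutive points $x=q_0,q_1,\dots,q_m=y$ with $d_c(q_i,q_{i+1})<\delta_0$, and define the chained holonomy $H_{x\to y}:=h^c_{q_{m-1}\to q_m}\circ\cdots\circ h^c_{q_0\to q_1}$ on the largest stable ball around $x$ for which all intermediate images remain in the respective $\eta_0$-neighborhoods; this domain has radius at least $\eta_0/L^m$.

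To upgrade this to a holonomy defined on all of $W^s(x,f)$, I would exploit the dynamics. For $z\in W^s(x,f)$, stable contraction gives $d_s(f^n(x),f^n(z))\to 0$ exponentially, while Lemma~\ref{nonexpandingcenter} keeps $d_c(f^n(x),f^n(y))\le(K_2/K_1)\,d_c(x,y)$ for every $n\ge 0$. In particular, the center segment from $f^n(x)$ to $f^n(y)$ admits a subdivision of size bounded independently of $n$, so the previous step yields chained holonomies $H_n:=H_{f^n(x)\to f^n(y)}$ all defined on a stable ball of some uniform radius $r>0$ around $f^n(x)$. For $n$ large, $f^n(z)\in W^s_r(f^n(x),f)$, and I set
\[
h^c_s(z):=f^{-n}\bigl(H_n(f^n(z))\bigr).
\]
The $f$-invariance of the center and stable foliations gives $f\circ H_n=H_{n+1}\circ f$ on overlapping domains, so this value is independent of the chosen $n$; continuity of $h^c_s$ is inherited from that of the local holonomies and of $f^{\pm 1}$; and bijectivity follows by symmetry, running the same construction with $x$ and $y$ interchanged to build the inverse.

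The main obstacle I anticipate is controlling the domain of the chained holonomy at the second step, since each successive composition can shrink it by the factor $L$. Lemma~\ref{nonexpandingcenter} is precisely what rescues this: it keeps the number of factors (and hence the Lipschitz loss) uniformly bounded along orbits, which is what permits the dynamical extension of the third step to land on a ball of uniform size. The construction of the unstable holonomy $h^c_u$ along $W^{cu}$-leaves is entirely analogous, with $f^{-1}$ in place of $f$.
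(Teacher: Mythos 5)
Your argument is essentially the paper's: you build a uniform local center holonomy (your chained construction is an explicit version of the paper's Lemma \ref{localholonomy}), transport it along the orbit using Lemma \ref{nonexpandingcenter} to keep the center distance---and hence the holonomy's domain---uniform in $n$, and use stable (resp.\ unstable, with $f^{-1}$) contraction to bring the point into that domain before conjugating back, exactly as in the paper's $h^c_u = f^n\circ h^c_n\circ f^{-n}$. The one cosmetic caveat is that for a $C^1$ system the local center holonomy need not be Lipschitz, so the domain bound $\eta_0/L^m$ should be replaced by the $m$-fold composition of the modulus-of-continuity lower bound $c_1(\cdot,\delta_0)$ from Lemma \ref{localholonomy}; this changes nothing in the structure of your proof.
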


\begin{proof}

First observe that restricted to any center unstable leaf, locally the center holonomy map is uniformly continuous:

\begin{Lem}\label{localholonomy}
For any $K>0$, there exist $\epsilon_K$ such that if $f\in\cU$, $y\in W^c_K(x,f)$, then the center holonomy map $h^c$ is well defined from $W^*_{\epsilon_K}(x,f)$ to $W^*(y,f)$, $*\in\{ s,u\}$. Moreover, for any $0<\epsilon<\epsilon_K$, there are $c_1(\epsilon,K),c_2(\epsilon,K)>0$ such that:
$$
W^*_{c_1(\epsilon,K)}(y,f)\subset h^c(W^*_{\epsilon}(x,f)\subset W^*_{c_2(\epsilon,K)}(y,f).
$$
Furthermore $\lim_{\epsilon\rightarrow 0}c_1(\epsilon,K)=\lim_{\epsilon\rightarrow 0}c_2(\epsilon, K)=0$.
\end{Lem}

\begin{proof}

The holonomy is clearly continuous, and by compactness it is uniformly continuous.

\end{proof}

Now let us continue the proof.

Let $f\in\cU$, and $x,y \in M$, with $y\in W^c(x,f)$. For any $N>0$, we will construct the center holonomy map 
from $W^u_N(x,f)$ to $W^u(y,f)$.

We consider $f^{-n}$ for $n$ very large, then on one hand, the size of $f^{-n}(W^u_N(x,f))$ is very small, and on the other hand $d^c(f^{-n}(x),f^{-n}(y))$ is bounded uniformly from above by a number $K$, by Lemma \ref{nonexpandingcenter}. By (uniform) continuity of the center foliation of Lemma \ref{localholonomy}, we can construct the center-holonomy map $h^c_{n}$ from $W^u_{\epsilon_K}(f^{-n}(x),f)$ to $W^u(f^{-n}(y),f)$, where $\epsilon_K$ is small enough (depending on $K$ but not on $n$). Taking $n$ large enough, we have that $f^{-n}(W^u_N(x,f))\subset W^u_{\epsilon_K}(f^{-n}(x),f)$. Then push forward by $f^n$, and we obtain the holonomy map $h_u^c=f^n\circ h^c_n\circ f^{-n}$ from $W^u_N(x,f)$ to $W^u(y,f)$. Note that by the invariance of the center foliation, $h^c_{n-1}= f\circ h^c_n\circ f^{-1}$. We complete the proof by letting $N\rightarrow \infty$. The case of the center holonomy between stable leaves is similar.

We remark that the center holonomy map $h^c: W^u(x,f)\to W^u(y,f)$ satisfies that the distance $d_c(z,h^c(z))$ is uniformly bounded from above by a constant depending only on $d_c(x,y)$. This is because for $n$ large enough,
$$d_c(f^{-n}(z), h^c_n(f^{-n}(z)))\approx d_c(f^{-n}(x),f^{-n}(y))$$ 
is bounded uniformly from above, then by Lemma \ref{nonexpandingcenter}, $d_c(z,h^c(z))$ is bounded uniformly from above.

\end{proof}

The following proposition shows that in every center unstable leaf, the center foliation and the unstable foliation hold some kind of
global product structure.

\begin{Prop}\label{complete}

$W^{cu}(x,f)=\cup_{y\in W^u(x,f)}W^c(y,f)=\cup_{y\in W^c(x,f)}W^u(y,f)$ for any $x\in M$ and $f\in \cU$.

\end{Prop}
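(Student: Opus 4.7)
The plan is to prove both equalities by the same connectedness argument inside the leaf $W^{cu}(x,f)$. The inclusions $\bigcup_{y\in W^u(x,f)} W^c(y,f) \subseteq W^{cu}(x,f)$ and $\bigcup_{y\in W^c(x,f)} W^u(y,f) \subseteq W^{cu}(x,f)$ are immediate from dynamical coherence, since both the center and the unstable foliations sub-foliate $W^{cu}$. For the non-trivial inclusion in the first equality, I would introduce
\[
A = \{y \in W^{cu}(x,f) : W^c(y,f) \cap W^u(x,f) \neq \emptyset\}
\]
and aim to prove $A = W^{cu}(x,f)$.

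First, $W^u(x,f) \subset A$ trivially, and $A$ is saturated by center leaves since $y' \in W^c(y,f)$ implies $W^c(y',f) = W^c(y,f)$. The main step, and the central use of the hypothesis, is to show that $A$ is also saturated by unstable leaves inside $W^{cu}(x,f)$. For this I would invoke Proposition \ref{centerholonomy}: if $y \in A$ is witnessed by $z \in W^c(y,f) \cap W^u(x,f)$, then the globally defined center holonomy along $W^c(y,f)$ produces a map $h^c_u : W^u(y,f) \to W^u(z,f) = W^u(x,f)$, and for any $y' \in W^u(y,f)$ the point $h^c_u(y')$ lies in $W^u(x,f) \cap W^c(y',f)$, showing $y' \in A$. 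This is precisely where the \emph{global} (as opposed to merely local) nature of the center holonomy is indispensable, and I expect it to be the main conceptual obstacle of the argument.

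Combining these two saturation properties with the local product structure of the transverse $W^c$ and $W^u$ sub-foliations inside $W^{cu}(x,f)$, both $A$ and its complement $W^{cu}(x,f) \setminus A$ are open: in any small product neighborhood one can pass from any point to any other by moving first along $W^c$ and then along $W^u$, and each such move preserves membership in $A$ (or in its complement). Since $W^{cu}(x,f)$ is connected and $A$ is non-empty, this forces $A = W^{cu}(x,f)$, giving the first equality. The second equality follows from exactly the same scheme: define $B = \{y \in W^{cu}(x,f) : W^u(y,f) \cap W^c(x,f) \neq \emptyset\}$, observe $W^c(x,f) \subset B$, check $W^u$-saturation trivially, and check $W^c$-saturation by applying the same global center holonomy $h^c_u$ to the witness point $w \in W^u(y,f) \cap W^c(x,f)$ (using $W^c(w,f) = W^c(x,f)$); the openness/connectedness step is identical.
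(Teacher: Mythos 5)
Your proof is correct, and for the first equality it is essentially the paper's argument in different clothing: both rest on the trivial inclusion from dynamical coherence, the global center holonomy of Proposition \ref{centerholonomy}, the local product structure of $W^c$ and $W^u$ inside the $cu$-leaf, and connectedness of the leaf. The paper phrases the connectedness step as a contradiction at a boundary point of $\bigcup_{y\in W^u(x,f)}W^c(y,f)$ (it picks $z'$ in the union near a boundary point $z$, intersects $W^u(z',f)$ with $W^c(z,f)$, and transports along the center holonomy back to $W^u(x,f)$), whereas you phrase it as ``$A$ is clopen''; these are the same mechanism. Where you genuinely diverge is the second equality. The paper proves a separate statement, Lemma \ref{unstableholonomy}, asserting that the \emph{unstable} holonomy $h^u_c\colon W^c(x,f)\to W^c(y,f)$ is globally defined for $y\in W^u(x,f)$ (again by pulling back under $f^{-n}$, applying local holonomy on ever larger center plaques, and pushing forward), and then repeats the boundary argument with this lemma in place of Proposition \ref{centerholonomy}. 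You instead observe that the set $B=\{y: W^u(y,f)\cap W^c(x,f)\neq\emptyset\}$ is $W^c$-saturated because the \emph{center} holonomy $W^u(y,f)\to W^u(y',f)$ carries the witness $w\in W^u(y,f)\cap W^c(x,f)$ to a point of $W^u(y',f)\cap W^c(w,f)=W^u(y',f)\cap W^c(x,f)$. This is a small but real economy: it makes Lemma \ref{unstableholonomy} unnecessary for Proposition \ref{complete}, at the cost of nothing, since Proposition \ref{centerholonomy} is already available. Both routes are sound.
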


\begin{proof}

We need a property on the unstable holonomy in each center unstable leaf, which is similar to Proposition \ref{centerholonomy},
hence we only give a sketch of its proof.

\begin{Lem}\label{unstableholonomy}

If $y\in W^u(x,f)$, then there exist the continuous holonomies $h^u_c:W^c(x,f)\rightarrow W^c(y,f)$.

\end{Lem}
\begin{proof}
For $n\to \infty$, $f^{-n}(y)\to f^{-n}(x)$. Hence, there is $K_n\to \infty$, such that $h^u_n$ is the unstable holonomy map from
$W^c_{K_n}(f^{-n}(x),f)$ to $W^c(f^{-n}(y),f)$. Define $h^u_c= f^{n}\circ h^u_n \circ f^{-n}$ the unstable holonomy map from $f^n(W^c_{K_n}(f^{-n}(x),f))$ to $W^c(y,f)$. By Lemma \ref{nonexpandingcenter}, there exist $K_n^{'}\to \infty$ such that
$W^c_{K_n^{'}}(x,f)\subset f^n(W^c_{K_n}(f^{-n}(x),f))$. Hence, $h^u_c$ is well defined.
\end{proof}

Now let us continue the proof. We first prove that 
$$W^{cu}(x,f)=\cup_{y\in W^u(x,f)}W^c(y,f).$$ 
It is clear that
$\cup_{y\in W^u(x,f)}W^c(y,f)\subset W^{cu}(x,f)$.
Suppose by contradiction that $\cup_{y\in W^u(x,f)}W^c(y,f)\subsetneq W^{cu}(x,f)$, take $z\in \partial \cup_{y\in W^u(x,f)}W^c(y,f)$, by the local
product structure of $W^c$ and $W^u$ in the center unstable leaf, it is easy to conclude that $W^c(z)\subset \partial \cup_{y\in W^u(x,f)}W^c(y,f)$ and $z\notin \cup_{y\in W^u(x,f)}W^c(y,f)$. Take $z^{'}\in \cup_{y\in W^u(x,f)}W^c(y,f)$ close to $z$, then $W^u(z^{'},f)\cap W^c(z,f)\neq \emptyset$.
Note that $W^c(z^{'},f)\cap W^u(x,f)\neq \emptyset$. Hence, by the global holonomy of Proposition \ref{centerholonomy}, $W^c(z,f)\cap W^u(x,f)\neq \emptyset$, which contradicts the fact that $z\notin \cup_{y\in W^u(x,f)}W^c(y,f)$.

The proof of $W^{cu}(x,f)=\cup_{y\in W^c(x,f)}W^u(y,f)$ is similar as above, replacing Proposition \ref{centerholonomy}
be Lemma \ref{unstableholonomy}.

The proof of Proposition \ref{complete} is complete.
\end{proof}

The following proposition is a generalization of a result in Plante \cite{Plante}: for every Anosov transtive flow, if the strong unstable manifold of a periodic point is not dense, then this Anosov flow is a suspension flow with constant roof function. 

\begin{Prop}\label{centerunstabledense}
For any $f\in\cU$ there exists $K_0>0$ such that for any $x\in h_f(\Lambda_i)$, we have that $W^c_{K_0}(W^u(x,f),f)$ is dense in $h_f(\Lambda_i)$.

\end{Prop}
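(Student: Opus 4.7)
The plan is to first establish the analogous density statement for the Anosov flow $\phi$ itself via Plante's theorem, and then transfer the conclusion to the perturbation $f\in\cU$ through the leaf conjugacy $h_f$ together with the global holonomy results from Propositions \ref{centerholonomy} and \ref{complete}.

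For the flow $\phi$, Plante's theorem applied to the transitive restriction $\phi|_{\Lambda_i}$ furnishes the dichotomy that either (a) the strong unstable manifold of every point of $\Lambda_i$ meets $\Lambda_i$ densely, or (b) $\phi|_{\Lambda_i}$ is topologically conjugate to a constant-roof suspension of an Anosov diffeomorphism with some roof value $c_i>0$. In case (a) any positive $K_0^\phi$ works; in case (b) the choice $K_0^\phi=c_i$ does. Passing to the maximum over the finitely many basic pieces yields one constant $K_0^\phi>0$ such that
\[
W^c_{K_0^\phi}(W^u(\tilde x,\phi_1),\phi_1)=\phi_{[-K_0^\phi,K_0^\phi]}(W^u(\tilde x,\phi))
\]
is dense in $\Lambda_i$ for every $\tilde x\in\Lambda_i$ and every $i$.

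To transfer to $f\in\cU$, fix $x,p\in h_f(\Lambda_i)$ and set $\tilde x=h_f^{-1}(x)$, $\tilde p=h_f^{-1}(p)$. By the previous paragraph choose $\tilde z_n\in W^u(\tilde x,\phi)\cap\Lambda_i$ and $t_n\in[-K_0^\phi,K_0^\phi]$ with $\phi_{t_n}(\tilde z_n)\to\tilde p$. Since $h_f$ is a homeomorphism sending each flow orbit of $\phi$ onto the corresponding center leaf of $f$, we have $h_f(\phi_{t_n}(\tilde z_n))\to p$, and each $h_f(\phi_{t_n}(\tilde z_n))$ lies on the center leaf $W^c(h_f(\tilde z_n),f)$ at center distance at most some $L=L(K_0^\phi)$, coming from the uniform continuity of $h_f$ restricted to flow segments of $\phi$-length $\leq K_0^\phi$. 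The leaf conjugacy also preserves center-unstable leaves, so $h_f(\tilde z_n)\in W^{cu}(x,f)$, and by Proposition \ref{complete} there is a unique $u_n\in W^u(x,f)$ with $h_f(\tilde z_n)\in W^c(u_n,f)$.

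The key remaining step is to show that $d_c(u_n,h_f(\tilde z_n))$ is uniformly bounded by a constant $R$ independent of $n$. Once such an $R$ is obtained, setting $K_0=L+R$ gives
\[
h_f(\phi_{t_n}(\tilde z_n))\in W^c_L(h_f(\tilde z_n),f)\subset W^c_{L+R}(u_n,f)\subset W^c_{K_0}(W^u(x,f),f),
\]
and passing to the limit yields $p\in\overline{W^c_{K_0}(W^u(x,f),f)}$, establishing the density. The uniform boundedness of $d_c(u_n,h_f(\tilde z_n))$ is the main technical obstacle: the leaf conjugacy respects the center and center-unstable foliations but not the strong unstable foliation, so a priori $h_f(W^u(\tilde x,\phi))$ could drift arbitrarily far from $W^u(x,f)$ in the center direction inside the common leaf $W^{cu}(x,f)$. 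I expect to extract the required bound using the compactness of $h_f(\Lambda_i)$, the local product structure from Corollary \ref{productstructure}, the non-expansion of center distances from Lemma \ref{nonexpandingcenter}, and the uniform center-distance estimate for the global center holonomy recorded at the end of the proof of Proposition \ref{centerholonomy}.
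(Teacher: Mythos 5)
Your proposal has a genuine gap at exactly the step you flag as ``the key remaining step'': the uniform bound $R$ on $d_c(u_n,h_f(\tilde z_n))$, i.e.\ on the center drift of $h_f(W^u(\tilde x,\phi))$ away from $W^u(x,f)$ inside $W^{cu}(x,f)$. None of the tools you list delivers this. The leaf conjugacy of Proposition \ref{centerconjugation} controls center and (on basic sets) center-unstable leaves but gives no information whatsoever about how $h_f$ distorts the strong unstable foliation of $\phi$ relative to that of $f$; the center-distance estimate at the end of the proof of Proposition \ref{centerholonomy} bounds the holonomy between two unstable leaves \emph{of $f$} in terms of $d_c(x,y)$, and says nothing about the object $h_f(W^u(\tilde x,\phi))$, which is not an unstable leaf of $f$; and Lemma \ref{nonexpandingcenter} cannot be brought to bear because $h_f$ is only a leaf conjugacy, not a conjugacy of dynamics, so you cannot iterate backwards to shrink $\tilde z_n$ toward $\tilde x$ and transport a local estimate. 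In short, you have reduced the proposition to a claim that is essentially as hard as the proposition itself, and left it unproved. (A secondary, smaller issue: Plante's theorem is stated for transitive Anosov flows on the whole manifold, while here you need its analogue on each basic piece $\Lambda_i$ of a possibly non-transitive flow; the paper explicitly presents the proposition as a \emph{generalization} of Plante's result rather than a consequence of it.)

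The paper's proof avoids this comparison entirely by working only with the invariant foliations of $f$. From the spectral decomposition and the fact that $h_f$ preserves center-unstable leaves of basic sets, $W^{cu}(x,f)$ is dense in $h_f(\Lambda_i)$. By Proposition \ref{complete} one exhausts $W^{cu}(x,f)$ by the sets $W^c_L(W^u_L(x,f),f)$, thickens by local stable disks $W^s_1$ to get an open cover of the compact set $h_f(\Lambda_i)$, and extracts a uniform $L$ (uniform in the base point by a second compactness argument). Applying this at $f^{-n}(x)$ and pushing forward by $f^n$, the unstable radius grows without bound, the center radius stays below a constant $K_0$ by Lemma \ref{nonexpandingcenter}, and the stable thickening contracts to nothing; letting $n\to\infty$ gives the density of $W^c_{K_0}(W^u(x,f),f)$. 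If you want to salvage your approach, the honest way to obtain your constant $R$ is essentially to run this same cover-and-iterate argument, at which point the detour through Plante and the leaf conjugacy of unstable manifolds buys you nothing.
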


\begin{proof}

By Proposition \ref{complete}, for any $x\in h_f(\Lambda_i)$,
$$
\cup_{L\in\mathbb N} (\cup_{z\in W^c_L(W^u_L(x,f),f)}W^s_1(z,f))=\cup_{z\in W^{cu}(x,f)}W^s_1(z,f).
$$ 
Because $\Lambda_i$ is a basic component of the Anosov flow $\phi$, $W^{cs}(h_f^{-1}(x),\phi)$ is dense in $\Lambda_i$.
Then by the center conjugation of Proposition \ref{nonwandering}, $W^{cu}(x,f)$ is dense in $h_f(\Lambda_i)$ and 
$$\{\Gamma_{f,L}(x)=\cup_{z\in W^c_L(W^u_L(x,f),f)}W^s_1(z,f)\}_{L\in \mathbb{N}}$$ 
is an open covering of the $h_f(\Lambda_i)$. Because $h_f(\Lambda_i)$ is compact, we can take $L(x)$ such that $h_f(\Lambda_i)\subset \Gamma_{f,L(x)}(x)$.

By the continuity of invariant foliations, for any $f\in\cU$, and for any $x\in h_f(\Lambda_i)$, 
there exists a small neighborhood $B_x$ of $x$ such that for any $z\in B_x$, 
$h_f(\Lambda_i)\subset \Gamma_{f,L(x)}(z)$. Take an open covering $B_{x_1},\dots,B_{x_k}$ of $h_f(\Lambda_i)$ and let $L=\max\{L(x_1),\dots,L(x_k)\}$. By the above construction, for any $x\in h_f(\Lambda_i)$, we always have 
$h_f(\Lambda_i) \subset \Gamma_{f,L}(x)$.

For any $x\in h_f(\Lambda_i)$, we have that $h_f(\Lambda_i) \subset \Gamma_{f,L}(f^{-n}(x))$, and by the invariance of $h_f(\Lambda_i)$ we get $h_f(\Lambda_i)\subset f^n(\Gamma_{f,L}(f^{-n}(x))$. Because the center is not expanding, there exists a constant $K_0>0$ depending only on $L$ and $\cU$ (not on $n$) such that
$$
f^n(\Gamma_{f,L}(f^{-n}(x)))\subset \cup_{z\in W^c_{K_0}(W^u(x,f),f)}f^n(W^s_1(f^{-n}(z),f)).$$
By the uniform contraction of the strong stable foliation, $W^c_{K_0}(W^u(x,f),f)$ is dense in $h_f(\Lambda_i)$.
The proof is complete

\end{proof}

We remark that it is easy to show that $K_0$ from above can be chosen such that the result of the Proposition \ref{centerunstabledense} is true for any $1\leq i\leq k$ and for any $g$ in a small neighborhood of $f$.

\subsection{Topological entropy}

In this subsection we will discuss about the topological entropy.

\begin{Def}
Let $M$ be a compact metric space, and $f:M\rightarrow M$ a homeomorphism. 
For $K\subset M$ a subset not necessary invariant, the {\it topological entropy of $f$ on $K$} 
is defined as follows. Let $d_n(x,y)=\max _{i=0}^{n-1}d(f^i(x),f^i(y))$. Let $a(f,n,\delta,K)$ be the maximal cardinality of $\delta$-separated set of $K$ in the metric $d_n$, and let $b(f,n,\delta,K)$ be the minimal cardinality of a $\delta$-spanning set of $K$ in the same metric $d_n$. Then
$$
a(f,\delta,K)=\limsup_{n\rightarrow\infty}\frac 1n\log a(f,n,\delta,K),
$$
$$
b(f,\delta,K)=\limsup_{n\rightarrow\infty}\frac 1n\log b(f,n,\delta,K),
$$
and the topological entropy of $f$ on $K$ is
$$
h(f,K)=\lim_{\delta\rightarrow 0}a(f,\delta,K)=\lim_{\delta\rightarrow 0}b(f,\delta,K).
$$
The topological entropy of $f$ is $h(f)=h(f,M)$.
\end{Def}

\begin{Rem}\label{piece}

With the same hypothesis as above, suppose $M=\cup_{i=1}^\infty M_i$ is a measurable partition of $M$, then
$$h(f)=\sup_i h(f,M_i).$$

\end{Rem}

In the case of partially hyperbolic diffeomorphisms with one-dimensional center, and with non-expanding center, the entropy can be calculated using only the unstable manifolds.

\begin{Def}\label{localproductset}

Let $f$ be a dynamically coherent partially hyperbolic diffeomorphism on $M$, then for any $x\in M$, we define
$$
A(x,\delta)=\{ y\in M: \exists x_u\in W^u_{\delta}(x,f), x_c\in W^c_{\delta}(x,f), y=W^c_{loc}(x_u,f)\cap W^u_{loc}(x_c,f)\},
$$
and
$$D(x,\delta)=\cup_{y\in A(x,\delta)}W^s_{\delta}(y,f).$$
\end{Def}

\begin{Prop}\label{unstableentropy}
Let $f$ be a dynamically coherent partially hyperbolic diffeomorphism on the compact manifold $M$, with one dimensional center, and $\Lambda$ an invariant set of $f$ with local product structure. Suppose that there exist $K_1,K_2>0$ such that if $y\in W^c_{K_1}(x)$ then $f^n(y)\in W^c_{K_2}(f^n(x))$, $\forall n\in\mathbb N$. 
Then there is $\delta_0>0$ such that for any $\delta<\delta_0$, $h(f,D(x,\delta)\cap\Lambda)=h(f, W^u_{\delta}(x)\cap\Lambda)$.
\end{Prop}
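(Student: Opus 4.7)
The containment $W^u_\delta(x)\cap\Lambda\subset D(x,\delta)\cap\Lambda$, obtained by taking $x_c=x$ in Definition \ref{localproductset}, immediately gives $h(f,W^u_\delta(x)\cap\Lambda)\le h(f,D(x,\delta)\cap\Lambda)$. So the plan is to establish the opposite inequality, by showing that once $\delta_0$ is chosen small enough, for each fixed $\delta<\delta_0$ and each $\epsilon>0$ there exists a constant $N(\delta,\epsilon)$, \emph{independent of $n$}, such that
$$
b(f,n,\epsilon,D(x,\delta)\cap\Lambda)\le N(\delta,\epsilon)\cdot b(f,n,\epsilon/3,W^u_\delta(x)\cap\Lambda).
$$
Dividing by $n$, taking $\limsup$, and then sending $\epsilon\to0$ yields $h(f,D(x,\delta)\cap\Lambda)\le h(f,W^u_\delta(x)\cap\Lambda)$.

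The construction of the test set proceeds by a three-coordinate decomposition. Every $y\in D(x,\delta)\cap\Lambda$ lies in $W^s_\delta(z)$ for some $z\in A(x,\delta)$, and $z=W^c_{loc}(x_u)\cap W^u_{loc}(x_c)$ with $x_u\in W^u_\delta(x)$ and $x_c\in W^c_\delta(x)$; since $A(x,\delta)\subset W^{cu}_{loc}(x)$, the local product structure of $\Lambda$ forces both $z$ and $x_u$ to lie in $\Lambda$ (identifying $W^u_{loc}(x)\cap W^c_{loc}(z)$ with $W^u_{loc}(x)\cap W^{cs}_{loc}(z)$ inside $W^{cu}_{loc}(x)$). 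The test set is built from three ingredients: an $(n,\epsilon/3)$-spanning set $S_u\subset W^u_\delta(x)\cap\Lambda$; a finite $\epsilon''$-dense subset $S_c\subset W^c_\delta(x)$, with $\epsilon''$ so small that the non-expansion hypothesis forces $d_c(f^i(p),f^i(q))<\epsilon/3$ for all $i\ge0$ whenever $d_c(p,q)<\epsilon''$; and a finite $(\epsilon/3)$-dense subset $S_s$ of a stable $\delta$-disk, which by the uniform contraction of $W^s$ remains $(n,\epsilon/3)$-dense in the Bowen metric for every $n$. Since $|S_c|$ and $|S_s|$ depend only on $\delta$ and $\epsilon$, the product $N(\delta,\epsilon):=|S_c|\cdot|S_s|$ is the required constant.

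For each $(s_u,s_c)\in S_u\times S_c$, set $\tilde z(s_u,s_c):=W^c_{loc}(s_u)\cap W^u_{loc}(s_c)$ and attach to it the stable translates encoded by $S_s$; the test set is the resulting finite union. Verifying the $(n,\epsilon)$-spanning property amounts to splitting $d_n(y,\tilde s)$ along three legs—unstable, center, stable—each bounded by $\epsilon/3$ via the corresponding net. The main obstacle I anticipate is controlling the interaction between the unstable Bowen spanning and the cu-intersection: switching from $W^c_{loc}(x_u)\cap W^u_{loc}(s_c)$ to $W^c_{loc}(s_u)\cap W^u_{loc}(s_c)$ produces a displacement along $W^u_{loc}(s_c)$ that one must compare to $d_n(x_u,s_u)$. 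Establishing the required uniform bound on forward iterates—essentially a distortion estimate for the unstable holonomy along a center segment of bounded length, valid for every $n\ge0$—uses the $f$-invariance of the center and unstable foliations together with the non-expansion hypothesis, and constitutes the most delicate step of the argument; if the distortion factor is $L$, one simply replaces $\epsilon/3$ by $\epsilon/(3L)$ on the right side of the key inequality, which does not affect the limits.
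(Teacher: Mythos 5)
Your overall architecture (decompose $D(x,\delta)$ into unstable, center and stable coordinates and span each factor, with the stable factor handled by uniform contraction and the unstable/center interaction handled by a uniform modulus of continuity for the center holonomy between unstable leaves) is essentially the paper's, and your treatment of the stable leg and of the holonomy ``distortion'' issue is sound: one does not get a Lipschitz factor $L$ in the $C^1$ setting, only a modulus of continuity $c(\epsilon)\to 0$ as $\epsilon\to 0$, but that is enough since the final step is a limit in $\epsilon$, exactly as in the paper.

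The genuine gap is in your center net $S_c$. You choose $\epsilon''$ ``so small that the non-expansion hypothesis forces $d_c(f^i(p),f^i(q))<\epsilon/3$ for all $i\ge 0$ whenever $d_c(p,q)<\epsilon''$.'' The hypothesis of the Proposition does not give this: it only says that a center distance at most $K_1$ stays at most $K_2$ for all forward time. It is a statement at one fixed pair of scales and does not scale down; nothing prevents $f$ from stretching an arbitrarily short center arc up to length comparable to $K_2$ at some later time (think of a Morse--Smale dynamic inside a center leaf near a repelling point). So there is no $n$-independent center net of the kind you want, and your constant $N(\delta,\epsilon)=|S_c|\cdot|S_s|$ cannot be taken independent of $n$ under the stated hypothesis. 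The paper circumvents exactly this: it only uses that for each $i$ the image $f^i(W^c_{K_1}(y))$ sits inside a one-dimensional curve of length at most $2K_2$, takes a $(0,c(\epsilon))$-net of that curve of cardinality about $2K_2/c(\epsilon)$ at each time $i=0,\dots,n$, and pulls these back to form an $(n,c(\epsilon))$-spanning set of the center fiber of cardinality about $2nK_2/c(\epsilon)$. The resulting multiplicative factor is linear in $n$, which is harmless because $\tfrac1n\log(Cn)\to 0$. Your argument is repaired by replacing your single $\epsilon''$-net $S_c$ with this union of pulled-back time-$i$ nets and accepting a factor $O(n)$ in the key counting inequality; as written, the claimed $n$-independent bound is unjustified.
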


\begin{proof}

We first prove that $h(f,W^u_{\delta}(x,f)\cap\Lambda)=h(f, A(x,\delta)\cap\Lambda)$. 

One inequality is obvious, since $W^u_{\delta}(x,f)\cap\Lambda\subset A(x,\delta)\cap\Lambda$. 

By the uniform transversity between the two bundles $E^c$ and $E^u$, for any $\delta<\delta_0$ sufficiently small, 
in the definition of $A(x,\delta)$ above, $y=W^c_{\delta}(x_u,f)\cap W^u_{\delta}(x_c,f)$. We can choose $\delta_0<K_1$.
Note that Lemma \ref{localholonomy} is still true in this case, let $c(\epsilon)=c_2(\epsilon,K_1)$ be the constant given there.

Let 
$S(n,\epsilon)$ be an $(n,\epsilon)$-spanning set of $W^u_{\delta}(x)\cap\Lambda$ of minimal cardinality. For any $y\in S(n,\epsilon)$, 
let $T(y,n,\epsilon)$ be a $(n,c(\epsilon))$-spanning set of $W^c_{K_1}(y)\cap A(x,\delta)$ of minimal cardinality. We claim that
the cardinality of $T(y,n,\epsilon)$ is bounded by $\frac{2nK_2}{c(\epsilon)}$. 
\begin{proof}
This is because the center manifold is one dimensional, and by hypothesis, $f^n(W^c_{K_1}(y,f))\subset W^c_{K_2}(f^n(y),f)$. 
For each $0\leq i \leq n$, we can take $T(f^i(y),0,\epsilon)$ a $(0,c(\epsilon))$-spanning set of $W^c_{K_2}(f^i(y),f)$ with cardinality bounded by $\frac{2K_2}{c(\epsilon)}$. Then
$\cup _{i=0}^n f^{-i}(T(f^i(y),0,\epsilon))$ forms the $(n,c(\epsilon))$-spanning set we need.

\end{proof}

Let $T(n,\epsilon)=\cup_{y\in S(n,\epsilon)}T(y,n,\epsilon)$. Then $|T(n,\epsilon)|<\frac{2nK_2}{c(\epsilon)}|S(n,\epsilon)|$ and one can easily show that $T(n,\epsilon)$ is $(n,2c(\epsilon))$-spanning for $A(x,\delta)\cap\Lambda$ (because of the local product structure of $\Lambda$).

So $b(f,n,2c(\epsilon),A(x,\delta)\cap\Lambda)<\frac{2nK_2}{c(\epsilon)}b(f,n,\epsilon,W^u_{\delta}(x)\cap\Lambda)$, and taking the logarithm, dividing by $n$, and taking the limit for $n\rightarrow\infty$, we get that 
$$b(f,2c(\epsilon),A(x,\delta)\cap\Lambda)\leq b(f,\epsilon,W^u_{\delta}(x)\cap\Lambda).$$ 
Now taking $\epsilon\rightarrow 0$ and using that fact that $\lim_{\epsilon\rightarrow 0}c(\epsilon)=0$ we get that $h(f,A(x,\delta)\cap\Lambda)\leq h(f,W^u_{\delta}(x)\cap\Lambda)$.

The proof that $h(f,D(x,\delta)\cap\Lambda)=h(f,A(x,\delta)\cap\Lambda)$ is similar (in fact it is easier, one uses the uniform contraction of the stable foliation).
\end{proof}

\section{Proof of Theorem \ref{t.mainA}}

In this section we will give the proof of Theorem \ref{t.mainA}. We fix $f\in\cU$. First we need to improve Lemma \ref{localholonomy}.

\begin{Lem}\label{l.holonomy}
Given $K>0$, there exist $\epsilon_K$, $\gamma_K>0$ and a neighborhood $\cU_f\subset\cU$ of $f$ such that if $g\in\cU_f$ and $y\in \cup_{z\in W^c_K(x,g)}B(z,\gamma_K)$, then the center-stable holonomy $h^{cs}$ is well-defined between $W^u_{\epsilon_K}(x,g)$ and $W^u(y,g)$, and for any $\epsilon<\epsilon_K$, there exist $c_1(\epsilon,K), c_2(\epsilon,K)>0$ such that
$$
W^u_{c_1(\epsilon, K)}(y,g)\subset h^{cs}(W^u_{\epsilon}(x,g))\subset W^u_{c_2(\epsilon, K)}(y,g).
$$
\end{Lem}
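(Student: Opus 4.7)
I plan to construct the center-stable holonomy $h^{cs}$ as the composition of the center holonomy of Lemma \ref{localholonomy}, applied to a reference point $z \in W^c_K(x,g)$ close to $y$, with a local $cs$-hop from $W^u(z,g)$ to $W^u(y,g)$ furnished by the local product structure of $W^{cs}$ and $W^u$. The uniformity over $g \in \cU_f$ comes from the $C^0$-continuous dependence of the invariant foliations of a partially hyperbolic diffeomorphism under $C^1$ perturbations, valid throughout $\cU$ by plaque expansiveness of $\phi_1$ and the Hirsch--Pugh--Shub theorem (Proposition \ref{centerconjugation}).

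First I choose $\cU_f \subset \cU$ and $\epsilon_K, \gamma_K > 0$ small enough that, uniformly in $g \in \cU_f$: (i) Lemma \ref{localholonomy} applies with the stated constants $\epsilon_K$ and $c_i(\epsilon,K)$; (ii) the local product structure of $W^{cs}$ and $W^u$ holds at scale $\gamma_K$ for $g$, meaning that $W^{cs}_{loc}(p,g) \cap W^u_{loc}(q,g)$ is a unique nearby point whenever $d(p,q) < \gamma_K$; and (iii) the image $W^u_{c_2(\epsilon_K,K)}(z,g)$ of $h^c$ is small enough relative to $\gamma_K$ that the local product structure of (ii) applies between any of its points and any $y \in B(z,\gamma_K)$. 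Each of these is arranged by compactness of $M$ and $C^0$-continuity of the invariant foliations under $C^1$ perturbations.

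Given $y \in B(z,\gamma_K)$ with $z \in W^c_K(x,g)$, I define
\[
h^{cs} \colon W^u_{\epsilon_K}(x,g) \xrightarrow{h^c} W^u_{c_2(\epsilon_K,K)}(z,g) \xrightarrow{h^{cs}_{loc}} W^u(y,g),
\]
where $h^c$ is the center holonomy of Lemma \ref{localholonomy} and $h^{cs}_{loc}$ sends $v$ to the unique nearby point of $W^{cs}_{loc}(v,g) \cap W^u(y,g)$ (which exists by item (ii), using $W^u(y,g) = W^u(y',g)$ for the point $y' = W^{cs}_{loc}(z,g) \cap W^u_{loc}(y,g)$). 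If $v = h^c(w)$ for $w \in W^u_{\epsilon_K}(x,g)$, then $v \in W^c(w,g) \subset W^{cs}(w,g)$, so $W^{cs}_{loc}(v,g) = W^{cs}_{loc}(w,g)$ and the image $h^{cs}_{loc}(v)$ lies in $W^{cs}_{loc}(w,g) \cap W^u(y,g)$; hence the composition is indeed the $cs$-holonomy applied to $w$. The containment estimates $W^u_{c_1(\epsilon,K)}(y,g) \subset h^{cs}(W^u_\epsilon(x,g)) \subset W^u_{c_2(\epsilon,K)}(y,g)$ follow by composing those of Lemma \ref{localholonomy} with the Lipschitz-type estimates for $h^{cs}_{loc}$ provided by uniform continuity of $W^{cs}$; the property $c_i(\epsilon,K) \to 0$ as $\epsilon \to 0$ is inherited from Lemma \ref{localholonomy}.

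The main obstacle I anticipate is establishing the uniformity of $\epsilon_K, \gamma_K, c_i$ over all $g \in \cU_f$ and all $y$ in the $\gamma_K$-tubular neighborhood of $W^c_K(x,g)$. I expect this to follow from compactness of $M$ combined with the uniform $C^0$-continuity of the foliations $W^{cs}(\cdot,g)$ and $W^u(\cdot,g)$ as $g$ ranges over a small $C^1$-neighborhood of $\phi_1$.
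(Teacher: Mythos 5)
Your proposal is correct and follows essentially the same route as the paper: the paper's (very terse) proof likewise takes the center holonomy of Lemma \ref{localholonomy} and extends it to a tubular neighborhood of $W^c_K(x,g)$ and to nearby $g$ via continuity of the center-stable holonomy and compactness of $M$. Your explicit factorization of $h^{cs}$ as the center holonomy followed by a local $cs$-hop given by the product structure of $W^{cs}$ and $W^u$ is just a more detailed writing-out of that same argument.
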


\begin{proof}
By Lemma \ref{localholonomy}, the center holonomy is well defined for $f$ and $W^c_K(x)$; one can extend this holonomy 
to a neighborhood of $f$ and a neighborhood of $W^c_K(x)$ using the continuity of the center-stable holonomy (and the compactness of $M$).
The bound $c_1(\epsilon,K)$ and $c_2(\epsilon,K)$ come directly from the compactness.
\end{proof}

Now we start the proof of the main Theorem. The proof is based on the uniform rate of expansion of unstable disks around the invariant sets $h_f(\Lambda_i)$. Because the non-wandering set of $f$ is inside $\cup_{i=1}^kh_f(\Lambda_i)$, by variation principle, $h(f)=h(f,\cup_{i=1}^kh_f(\Lambda_i))$. We can assume without loss of generality that the entropy of $f$ is $h(f)=h(f|_{h_f(\Lambda_1)})$. Denote $h_f(\Lambda_1)=\Lambda_f$. The following is the main lemma which we will use.

\begin{Lem}\label{uniformexpanding}
Let $f$, $\Lambda_f$ be as above, and let $\epsilon>0$. There exist $\delta>0$ and $N(\epsilon)>0$ such that for any $x\in\Lambda_f$, $f^{N(\epsilon)}(W^u_{\delta}(x,f))$ contains at least $\exp^{N(\epsilon)(h(f)-3\epsilon)}$ disjoint sets of the form $W^u_{2\delta}(y_i,f)$, with $y_i\in\Lambda_f$.
\end{Lem}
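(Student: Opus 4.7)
Cover the compact set $\Lambda_f$ by finitely many neighborhoods of the form $D(z_i, \delta)$ with $z_i \in \Lambda_f$. By Remark \ref{piece} and the assumption $h(f) = h(f|_{\Lambda_f})$, at least one $D(y_0, \delta) \cap \Lambda_f$ has entropy equal to $h(f)$; Proposition \ref{unstableentropy} then yields $h(f, W^u_\delta(y_0, f) \cap \Lambda_f) = h(f)$. By the definition of topological entropy, for each sufficiently small $\eta > 0$ there are arbitrarily large $N$ admitting an $(N, \eta)$-separated set $E_N \subset W^u_\delta(y_0, f) \cap \Lambda_f$ of cardinality at least $\exp^{N(h(f) - 2\epsilon)}$. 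Because $f$ expands $W^u$ uniformly and all $e \in E_N$ share the leaf $W^u(y_0, f)$, the unstable distance $d_u(f^k(e), f^k(e'))$ is non-decreasing in $k$: once the ambient separation reaches $\eta$ at some $k \leq N$, this persists to time $N$. Hence $f^N(E_N)$ is pairwise $d_u$-separated by at least $\eta$ inside $W^u(f^N(y_0), f) \cap \Lambda_f$.

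\textbf{Stage 2 (Uniform transport to arbitrary $x$).} Fix $x \in \Lambda_f$. The uniform covering constant $L$ from the proof of Proposition \ref{centerunstabledense} (applied to $f^N(x)$) supplies a point $p \in W^u_L(f^N(x), f)$ together with a center segment of length $\leq L$ from $p$ ending at a point stably $\eta$-close to $y_0$. Lemma \ref{l.holonomy} then produces a center-stable holonomy $h^{cs}: W^u_\delta(y_0, f) \to W^u(p, f) = W^u(f^N(x), f)$, with image size controlled by $c_1, c_2$. Whenever $N$ is large enough that $\lambda^N \delta > L + c_2(\delta, L + 1)$, where $\lambda > 1$ is a uniform lower bound for the unstable expansion of $f$, one has $h^{cs}(W^u_\delta(y_0, f)) \subset W^u_{\lambda^N \delta}(f^N(x), f) \subset f^N(W^u_\delta(x, f))$. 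The $f$-invariance of the center-stable foliation makes $h^{cs}$ commute with $f^N$, so the transported points $y_i := h^{cs}(f^N(e_i))$, $e_i \in E_N$, all lie inside $f^N(W^u_\delta(x, f))$.

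\textbf{Stage 3 (Disjoint balls and the main obstacle).} By uniform continuity of the center-stable holonomy, the pairwise $d_u$-distances of the $y_i$ admit a positive lower bound $\rho(\eta)$, and choosing $\eta$ so that $\rho(\eta) > 4\delta$ makes the balls $W^u_{2\delta}(y_i, f)$ pairwise disjoint inside $f^N(W^u_\delta(x, f))$, yielding the required count. The core difficulty is the simultaneous calibration of scales: $\eta$ must be small enough so that $a(f, \eta, W^u_\delta(y_0, f) \cap \Lambda_f) \geq h(f) - 2\epsilon$ (a threshold that depends on $\delta$), yet large enough that $\rho(\eta) > 4\delta$. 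Compatibility is obtained by choosing $\delta$ sufficiently small at the outset so that both thresholds admit a common $\eta$; then $N$ is taken large enough for Stage 2 to succeed uniformly in $x$. A secondary technical point is verifying that each $y_i$ actually lies in $\Lambda_f$ rather than merely close to it: since $y_i \in W^{cs}(f^N(e_i)) \cap W^u(f^N(x))$, the local product structure (Corollary \ref{productstructure}) requires both endpoints to be in $\Lambda_f$, which forces a careful refinement of the choice of $p$ inside $\Lambda_f$ using the density statement of Proposition \ref{centerunstabledense} combined with the product structure; any small cardinality loss in $E_N$ from such refinement is absorbed into the degradation of the lower bound from $\exp^{N(h(f) - 2\epsilon)}$ to $\exp^{N(h(f) - 3\epsilon)}$.
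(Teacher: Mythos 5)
Your overall strategy is the paper's: concentrate the entropy on a single unstable disk $W^u_{\delta}(x_0,f)$ via Remark \ref{piece} and Proposition \ref{unstableentropy}, use the $\gamma_0$-density of $W^c_{K_0}(W^u_L(\cdot,f),f)$ in $\Lambda_f$ to build a center-stable holonomy into an iterate of $W^u_{\delta}(x,f)$, and push separated sets through it. However, there are two genuine gaps in the execution.

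First, the time bookkeeping. You use a single integer $N$ both as the length of the separated set $E_N\subset W^u_{\delta}(y_0,f)$ and as the number of iterates needed so that $f^N(W^u_{\delta}(x,f))$ is large enough to receive the holonomy image. Your $h^{cs}$ is defined from the time-zero disk $W^u_{\delta}(y_0,f)$ into the time-$N$ disk $f^N(W^u_{\delta}(x,f))$, yet the points you transport are $f^N(e_i)$, which do not lie in that domain (they lie in $W^u(f^N(y_0),f)$, far outside $W^u_{\delta}(y_0,f)$). If you instead transport $e_i$ itself, the images $h^{cs}(e_i)$ need not be $d_u$-separated at all (an $(N,\eta)$-separated set can be exponentially clustered at time zero), so the $W^u_{2\delta}$-balls around them overlap. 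If you conjugate the holonomy by $f^N$ so as to transport the genuinely separated points $f^N(e_i)$, the images land in $f^{2N}(W^u_{\delta}(x,f))$, and $\exp^{N(h(f)-2\epsilon)}$ disjoint disks after $2N$ iterates gives exponential rate roughly $(h(f)-2\epsilon)/2$, not $h(f)-3\epsilon$. The paper resolves exactly this by writing $N=n_1+n_{\epsilon}+n_2$ with $n_1,n_2$ \emph{fixed}: a fixed $n_1$ grows $W^u_{\delta}(x,f)$ to radius $L+c_2(\delta,K_0)$ so that it receives the holonomy image of the whole disk $W^u_{\delta}(x_0,f)$ at time zero on the $x_0$ side; then both sides are iterated the variable time $n_{\epsilon}$, the holonomy commuting with $f$ and keeping bounded $cs$-distance by Lemma \ref{nonexpandingcenter} and stable contraction; since $n_{\epsilon}/(n_1+n_{\epsilon}+n_2)\to 1$, the rate only degrades from $h(f)-2\epsilon$ to $h(f)-3\epsilon$.

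Second, your calibration $\rho(\eta)>4\delta$ is circular and not justified. To get $a(f,\eta,W^u_{\delta}(y_0,f)\cap\Lambda_f)>h(f)-\epsilon$ you must take $\eta$ small after $\delta$ is fixed, and the holonomy distortion then gives $\rho(\eta)$ of order at most $\eta$, which tends to zero; there is no reason it should exceed $4\delta$, and shrinking $\delta$ merely moves the threshold for $\eta$ without any uniformity. The paper needs no such inequality: the transported points bound disjoint $W^u_{c_1(\rho/2,\hat K_0)}$-disks for a possibly very small $\rho$, and a further \emph{fixed} number $n_2$ of iterates of the injective map $f$ turns these into disjoint sets each containing a full $W^u_{2\delta}$-disk. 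Adopting the fixed-$n_1$ growth and fixed-$n_2$ blow-up repairs both issues, and the membership $y_i\in\Lambda_f$ then follows directly from the local product structure of $\Lambda_f$ applied to $a\in\Lambda_f$ and the target leaf, with no cardinality loss to absorb.
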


Let us provide the idea of the proof first. The entropy of $f$ is realized on some $W^u_{\delta}(x_0,f)$ for some $\delta$ small and $x_0\in\Lambda_f$. For any $x\in\Lambda_f$, there exists an $n_1$ (independent of $x$) such that $f^{n_1}(W^u_{\delta}(x,f))$ contains a disk which is the image of $cs$-holonomy of $W^u_{\delta}(x_0,f)$ (the holonomy has the distance at most $K_0$ in the center direction and very small in the stable direction). Then a $(n,\rho)$-separated set in $W^u_{\delta}(x_0,f)$ can be translated to a $(n+n_1,c_1(\rho,\hat{K}_0))$-separated set in $W^u_{\delta}(x,f)$ for some $\hat{K}_0>0$. By iterating $n_2$ more times, we obtain an $(n+n_1+n_2,2\delta)$-separated set. Taking $\rho$ arbitrarily small and $n$ arbitrarily large, and using the fact that the entropy of $f$ is attained on $W^u_{\delta}(x_0,f)$, we will obtain the result.

\begin{proof}

Let us recall that $K_0$ is given in Proposition \ref{centerunstabledense}. By Lemma \ref{nonexpandingcenter}, there is
$\hat{K}_0$ such that for any $x\in M$ and $n\in \mathbb{Z}$, $f^n(W^c_{K_0}(x,f))\subset W^c_{\hat{K}_0}(f^n(x),f)$. We may choose 
$\hat{K}_0>K_0$. Taking $K=\hat{K}_0$ in Lemma \ref{l.holonomy}, there exist $\epsilon_0=\epsilon_{\hat{K}_0}$ and $\gamma_0=\gamma_{\hat{K}_0}$. We claim that there is $L>0$ such that for any $x\in \Lambda_f$, $W^c_{K_0}(W^u_{L}(x,f),f)$ is $\gamma_0$ dense in $\Lambda_f$.
\begin{proof}
For each point $x\in \Lambda_f$, by Proposition \ref{centerunstabledense}, there is $L_x>0$ such that $W^c_{K_0}(W^u_{L(x)}(x,f),f)$ is $\gamma_0$ dense in $\Lambda_f$. In fact, there is a open neighborhood $B_x$ such that for any $y\in B_x$, $W^c_{K_0}(W^u_{L(x)}(x,f),f)$ is $\gamma_0$ dense in $\Lambda_f$. Then $\{B_x\}_{x\in \Lambda_f}$ is an open cover. Take a finite subcover $\{B_{x_i}\}$ and let $L=\max\{L(x_i)\}$.
The proof is complete.

\end{proof}

Take $\delta=\min\{\epsilon_0,\delta_0\}$ where $\delta_0$ is given by Proposition \ref{unstableentropy}.
Because $\Lambda_f$ is compact, it can be covered by finitely many sets $D(x_i,\delta)$ with $x_i\in \Lambda_f$.
Take a finite partition of $\Lambda_f=\cup_{i=1}^l P_i$ such that each component of this partition is contained in one of the above 
open set $D(x_j,\delta)$. By Remark \ref{piece}, there is $1\leq i \leq l$ such that $h(f)=h(f,P_i)$. Suppose that
$P_i$ is contained in $D(x_0,\delta)$, then $h(f)=h(f,D(x_0,\delta))$ for some $x_0\in \Lambda_f$. 
Then by Proposition \ref{unstableentropy}, $h(f)=h(f,W^u_{\delta}(x_0,f))$.

There is $n_1$ such that for any $x\in \Lambda_f$, $f^{n_1}(W^u_{\delta}(x,f))\supset W^u_{L+c_2(\delta,K_0)}(f^n(x),f)$. By the claim above,
$W^c_{K_0}(f^{n_1}(W^u_{\delta}(x,f)))$ is $\gamma_0$ dense in $\Lambda_f$. By Lemma \ref{l.holonomy}, there is a center stable 
holonomy map $h^{cs}_{x_0}$ between $W^u_{\delta}(x_0,f)$ and $W^u_{c_2(\delta,K_0)}(z,f)\subset f^{n_1}(W^u_{\delta}(x,f))$. 

Choose $0<\rho<\delta$ such that $$a(f,\rho,W^u_{\delta}(x_0,f))>h(f, W^u_{\delta}(x_0,f))-\epsilon=h(f)-\epsilon.$$

Take $n_2$ such that for any $y\in \Lambda_f$, $f^{n_2}(W^u_{c_1(\rho/2,K_0)}(y,f))\supset W^u_{2\delta}(f^{n_2}(y),f)$ where $c_1(\rho/2,K_0)$ is given by Lemma \ref{l.holonomy}. 
There exists $n_{\epsilon}$, large enough so 
$$\frac{n_\epsilon}{n_1+n_{\epsilon}+n_2}>\frac{h(f)-3\epsilon}{h(f)-2\epsilon},$$ 
and such that we have $a(f,n_\epsilon,\rho, W^u_{\delta}(x_0,f))>\exp^{n_{\epsilon}(h(f)-2\epsilon)}$. Let $N(\epsilon,\delta)=n_1+n_\epsilon +n_2$.

Let $S$ be a maximal $(n_{\epsilon},\rho)$-separated set in $W^u_{\delta}(x_0,f)$. Then $$|S|=a(f,n_{\epsilon},\rho_0,W^u_{\delta_0}(x_0,f))>\exp^{n_{\epsilon}(h(f)-2\epsilon)},$$ 
and by the choice of $n_{\epsilon}$ we get $|S|>\exp^{N(\epsilon,\delta)(h(f)-3\epsilon)}$.

Because $W^u$ is expanding, this means that $\forall a,b\in S$, $d_u(f^{n_{\epsilon}}(a),f^{n_{\epsilon}}(b))>\rho$. Then $W^u_{\rho/2}(f^n(a),f)\cap W^u_{\rho/2}(f^n(b),f)=\emptyset$. 
Denote by $a^{'}=h^{cs}_{x_0}(a)\in f^{n_1}(W^u_{\delta}(x,f))$, then there is a piecewise smooth curve $l(a)=l^c(a)\cup l^s(a)$ connecting $a$ and $a^{'}$ such that $l^c(a)\subset W^c_{K_0}(a^{'},f)$ and $l^s(a)\subset W^s_{\gamma_0}(a,f)$. This implies that $f^{n_{\epsilon}}(a)$ and $f^{n_{\epsilon}}(a^{'})$ is connected by $f^{n_\epsilon}(l(a))=f^{n_\epsilon}(l^c(a)) \cup f^{n_\epsilon}(l^s(a))$, where $f^{n_\epsilon}(l^c(a))\subset W^c_{\hat{K}_0}(f^{n_\epsilon}(a))$ by Lemma \ref{nonexpandingcenter}, and $f^{n_\epsilon}(l^s(a))\subset W^s_{\gamma_0}(f^{n_\epsilon}(a^{'}))$ by the uniformly contraction along the strong stable bundle. Hence, by Lemma \ref{l.holonomy},
there is a holonomy map $h^{cs}_{f^{n_\epsilon}(a)}= f^{n_\epsilon}\circ h^{cs}_{x_0}\circ f^{-n_\epsilon}$ from 
$W^{u}_{\rho/2}(f^{n_\epsilon}(a),f)$ to $W^u_{c_2(\rho/2,\hat{K}_0)}(f^{n_\epsilon}(a^{'}),f)$, such that
$$h^{cs}_{f^{n_\epsilon}(a)}(W^u_{\rho/2}(f^{n_\epsilon}(a),f))\supset W^u_{c_1(\rho/2,\hat{K}_0)}(f^{n_\epsilon}(a^{'}),f).$$

Then, there is a family of distinct unstable disks $\{W^u_{c_1(\rho/2,\hat{K}_0)}(f^{n_\epsilon}(a^{'}),f)\}_{a\in S}$, which are contained in $f^{n_1+n_\epsilon}(W^u_{\delta}(x,f))$. By the choice of $n_2$ and definition of $N(\epsilon, \delta)$, 
$f^{N(\epsilon,\delta)}(W^u_{\delta}(x,f))$ contains $\exp^{N(\epsilon)(h(f)-3\epsilon)}$ disjoint sets
$$\{W^u_{2\delta}(f^{n_\epsilon+n_2}(a^{'}),f)\}_{a^{'}\in h^{cs}_{x_0}(S)}.$$
And by the product structure of $\Lambda_f$, $f^{n_\epsilon+n_2}(a^{'})\in \Lambda_f$. The proof is complete.

\end{proof}

Using the continuity of the foliations we get the following corollary.

\begin{Cor}
Let $f$, $\Lambda_f$ be as above, and let $\epsilon, \delta>0$, and $N(\epsilon,\delta)$ given by the previous lemma. There exists a neighborhood $\cU_f(\epsilon)\subset\cU$ of $f$ such that for any $g\in\cU_f(\epsilon)$, and for any $x\in\Lambda_g$ (corresponding to $\Lambda_f$), then $g^{N(\epsilon,\delta)}(W^u_{\delta}(x,g)$ contains at least $\exp^{N(\epsilon,\delta)(h(f)-3\epsilon)}$ disjoint sets of the form $W^u_{\delta}(y_i,g)$, with $y_i\in\Lambda_g$.
\end{Cor}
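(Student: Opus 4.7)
The idea is to transfer the conclusion of Lemma \ref{uniformexpanding} from $f$ to any nearby $g$ using the continuity of the leaf conjugacy, of the unstable foliation, and of the iterate $g^N$, together with the crucial observation that $N=N(\epsilon,\delta)$ and the cardinality $M := \lceil \exp^{N(\epsilon,\delta)(h(f)-3\epsilon)} \rceil$ are fixed finite numbers independent of $g$ and of the base point.

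First, given $g\in\cU$ and any $x\in\Lambda_g=h_g(\Lambda_1)$, I would set $x_f := h_f\circ h_g^{-1}(x)\in\Lambda_f$ and apply Lemma \ref{uniformexpanding} to $f$ at $x_f$ to produce points $y_1,\dots,y_M\in\Lambda_f$ such that the disks $W^u_{2\delta}(y_i,f)$ are pairwise disjoint and contained in $f^{N}(W^u_\delta(x_f,f))$. I would then transport these to $\Lambda_g$ via $y_i^g := h_g\circ h_f^{-1}(y_i)\in\Lambda_g$, the natural leaf-conjugacy image in $\Lambda_g$.

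The remaining task is to check that, for $g$ in a sufficiently small neighborhood $\cU_f(\epsilon)\subset\cU$, the disks $W^u_\delta(y_i^g,g)$ are pairwise disjoint and contained in $g^N(W^u_\delta(x,g))$. Both assertions reduce to continuity. Since $h_g$ is $C^0$-close to $h_f$ by Proposition \ref{centerconjugation}, the points $y_i^g$ and $x$ are close to $y_i$ and $x_f$ respectively; since the unstable foliation varies continuously on compact pieces of $M$ as $g$ varies in the $C^1$-topology, each $W^u_\delta(y_i^g,g)$ is Hausdorff-close to $W^u_\delta(y_i,f)$, which sits strictly inside the disjoint disk $W^u_{2\delta}(y_i,f)$ with a $\delta$ safety margin, giving pairwise disjointness. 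Similarly, $g\mapsto g^N$ is continuous in the $C^1$-topology for fixed $N$, so $g^N(W^u_\delta(x,g))$ is Hausdorff-close to $f^N(W^u_\delta(x_f,f))$; the latter contains each $W^u_{2\delta}(y_i,f)$ with the same $\delta$ margin, forcing the inclusion of the slightly smaller perturbed disk $W^u_\delta(y_i^g,g)$ for $g$ close enough to $f$.

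The only subtlety is to select a single neighborhood $\cU_f(\epsilon)$ that works \emph{uniformly} for all $x\in\Lambda_g$, rather than one per $x$. This follows from compactness of $\Lambda_f$, uniform continuity of the invariant foliations on compact pieces of $M$, and the $x$-independence of $M$ and $N$: a standard covering of $\Lambda_f$ by finitely many sets on which all required closeness estimates can be made simultaneously small produces the desired $\cU_f(\epsilon)$. I do not anticipate any substantive obstacle here; the difficult work has already been done in Lemma \ref{uniformexpanding}, and this corollary is essentially a continuity upgrade that exploits the built-in $\delta$ safety margin between the disks $W^u_\delta$ and $W^u_{2\delta}$.
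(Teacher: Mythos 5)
Your proposal is correct and follows the same route the paper intends: the paper offers no written proof beyond the one-line appeal to ``continuity of the foliations,'' and your argument is precisely the standard elaboration of that remark, using the fixed finite data $N(\epsilon,\delta)$ and the disk count, the leaf conjugacies to transport base points, the $\delta$ safety margin between $W^u_{2\delta}(y_i,f)$ and $W^u_{\delta}(y_i^g,g)$, and compactness of $\Lambda_f$ for uniformity in $x$. The only small refinement worth making is to measure the separation and the containment margins in the intrinsic (leafwise) metric $d_u$ on the single unstable leaf through $g^{N}(x)$ rather than via Hausdorff distance in $M$, since an unstable leaf may pass close to itself in the ambient manifold.
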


Now we can finish the proof of the main theorem. By induction we can show that for any $g\in\cU_f(\epsilon)$, any $x\in\Lambda_g$, and any $l>0$, then $g^{lN(\epsilon,\delta)}(W^u_{\delta}(x,g)$ contains at least $\exp^{lN(\epsilon,\delta)(h(f)-3\epsilon)}$ disjoint sets of the form $W^u_{\delta}(y_i,g)$ for $y_i\in \Lambda_g$. But this gives rise to a $(lN(\epsilon,\delta),\delta)$-separated set of $W^u_{\delta}(x,g)$ of cardinality greater or equal to $\exp^{lN(\epsilon,\delta)(h(f)-3\epsilon)}$, so $h(g)\geq a(g,\delta,W^u_{\delta}(x,g))\geq h(f)-3\epsilon$, which shows the lower semicontinuity of the entropy function $h$.

The upper semicontinuity for partially hyperbolic diffeomorphisms with one-dimensional center bundle is proved in \cite{LVY}. We complete the proof of the Theorem \ref{t.mainA}.

\section{Appendix: other partially hyperbolic diffeomorphisms}
Here we show that the topological entropy depends continuously in the case of the other known partially hyperbolic diffeomorphisms with one dimensional center mentioned in Introduction.

\begin{enumerate}
\item[(1)]
On the uniformly hyperbolic diffeomorphisms case, this diffeomorphism is structure stable, i.e., it is conjugate to nearby diffeomorphisms. Hence, the topological entropy is locally constant.
\item[(2)]
If $f$ is a skew products over (topologically) uniformly hyperbolic base, as an immediate corollary of Ledrappier-Walters variational principle \cite{LW}, the topological entropy of $f$ and nearby diffeomorphisms coincide to the topological entropy on the base, which is
topologically uniformly hyperbolic homeomorphisms, and the topological entropy is locally constant. For a more complete discussion see \cite{HHTU}.

Let us note that, by the work of Parwani, Hammerlindl and Potrie (\cite{P, H13b, HP14}), this case contains all partially hyperbolic diffeomorphisms on 3 dimension Nilmanifold where the manifold is different to $T^3$.

\item[(3)]
A 3 dimensional partially hyperoblic diffeomorphism over $T^3$ is called derived Anosov if it is in the same isotopy class with
a linear Anosov diffeomorphism. The first derived Anosov example was given by Ma\~n\'e \cite{M} to construct the example of diffeomorphisms which are transtive but not hyperoblic. In the Ma\~n\'e's example, the diffeomorphisms are $C^0$ close to the linear Anosov diffeomorphism. Recent study on general derived Anosov diffeomorphisms can be found in \cite{BBI,H13a,Potrie,HP14,FPS}.

The topological entropy of a 3 dimensional derived from Anosov diffeomorphism has the same topological entropy as the linear Anosov diffeomorphism. For the original Ma\~n\'e's example \cite{M}, this fact was proved by \cite{BFSV}. For general derived Anosov diffeorphisms, this was observed by Ures in \cite{U} (for a generalization to higher dimension see \cite{FPS}).

\item[(4)]
The non-dynamically coherent 3 dimensional examples of Hertz-Hertz-Ures \cite{HHU} is in fact an Axiom A diffeomorphism with a torus attractor and
a torus repeller. By the $\Omega$ stability theorem, its non-wandering set is conjugate to the non-wandering set of nearby diffeomorphisms. This implies that the topological entropy is locally constant.

\item[(6)]
Let us observe that the new 3 dimensional examples by Bonatti-Parwani-Potrie are perturbations of a partially hyperbolic 
diffeomorphism $f_0$ where $f_0$ has a unique attractor $A$ and a unique repeller $R$, and $f|_{A\cup R}$ is the time-one map of a hyperbolic flow $X_t$ restricted to basic pieces (\cite{BPP}[Theorem 1.1]). It is shown that $f_0$ is dynamically coherent and the center foliaiton is plaque expansive (definition see \cite{HPS}). Hence, by \cite{HPS},
there is a $C^1$ neighborhood $\cU$ of $f_0$ such that every diffeomorphism $f\in \cU$ is dynamically coherent. Moreover, the center foliation of $f$ is structural stable, there is a homeomorphism $h_f$ such that for any $x\in M$, $h_f(W^c(x,f_0))=W^c(h_f(x),f)$ and 
$d_{C^0}(h,Id)$ is close to zero.

Denote by $A_f=h_f(A)$ and $R_f=h_f(R)$, both $A_f$ and $R_f$ are compact invariant and center saturated subset of $f$.
Because $A$ (resp. $R$) is an attractor (resp. repeller) for hyperbolic flow $X_t$, it is $cu$ (resp. $cs$) saturated, which means, contain entire center unstable (resp. center stable) leaves of the corresponding diffeomorphism. Then by the leaf conjugacy,
$A_f$ and $R_f$ are $cu$ and $cs$ saturated respectively.

\begin{Lem}
For any $f\in \cU$, the non-wandering set of $f$ is contained in $A_f\cup R_f$.
\end{Lem}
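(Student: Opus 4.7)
The plan is to mimic Proposition \ref{nonwandering}, replacing the Anosov-flow filtration by the attractor--repeller filtration available here. First, by the BPP construction \cite{BPP}, every chain-recurrent class of $f_0$ lies in $A\cup R$, so $NW(f_0)\subset A\cup R$ and every orbit outside $R$ has $\omega$-limit in $A$ (symmetrically for $R$). Choose open trapping regions $V\supset A$ and $W\supset R$ with disjoint closures, $f_0(\bar V)\subset V$, $f_0^{-1}(\bar W)\subset W$, $\bigcap_{n\ge 0}f_0^n(\bar V)=A$ and $\bigcap_{n\ge 0}f_0^{-n}(\bar W)=R$. Replacing $V,W$ by iterates $f_0^k(V),f_0^{-k}(W)$, these can be shrunk to be arbitrarily close to $A$ and $R$, and by $C^0$-continuity both trapping inclusions persist for every $f$ in a (possibly smaller) neighborhood $\cU$.

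Now fix $x=h_f(x_0)\notin A_f\cup R_f$, so $x_0\notin A\cup R$. By the shrinking of $V,W$ and the $C^0$-smallness of $h_f$, we may arrange $x_0,x\notin \bar V\cup \bar W$. Pick a small neighborhood $U_0$ of $x_0$ disjoint from $\bar V\cup \bar W$. Since $x_0$ is wandering and non-periodic for $f_0$, with forward orbit converging to $A\subset V$, there exist $N>0$ and, after further shrinking $U_0$, the estimates $f_0^n(\bar U_0)\cap \bar U_0=\emptyset$ for $1\le n<N$ and $f_0^n(\bar U_0)\subset V$ for all $n\ge N$.

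For $f$ sufficiently close to $f_0$, the finite iterates $f^n$ with $n\le N$ are $C^0$-close to $f_0^n$ and $h_f$ is $C^0$-close to the identity, so the same dichotomy transfers: $f^n(h_f(\bar U_0))\cap h_f(\bar U_0)=\emptyset$ for $1\le n<N$ and $f^N(h_f(\bar U_0))\subset V$. By the trapping property $f(\bar V)\subset V$ this extends to $f^n(h_f(\bar U_0))\subset V$ for all $n\ge N$, a set disjoint from $h_f(\bar U_0)\subset M\setminus \bar V$. Hence $h_f(U_0)$ is a wandering neighborhood of $x$ for $f$, so $x\notin NW(f)$, which is the required inclusion.

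The main obstacle is the very first step: the identity $NW(f_0)\subset A\cup R$ does not follow from the existence of a unique attractor and a unique repeller alone, since in principle further chain-recurrent classes could lie in between. It must be extracted from the specific BPP construction, where $A\cup R$ is shown to exhaust the chain-recurrent set; once this is in hand, the rest is a standard persistence-of-trapping-regions argument combined with the $C^0$-closeness of $h_f$ to the identity and of $f^n$ to $f_0^n$ on bounded time windows.
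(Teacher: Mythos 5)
There is a genuine gap: your argument only handles points lying \emph{outside} the trapping regions, and never shows that points of $V\setminus A_f$ (or $W\setminus R_f$) are wandering for $f$. For such a point the forward $f$-orbit stays in $V$ forever, so the mechanism ``the orbit eventually enters $V$, which is disjoint from $U_0$'' says nothing. Your attempted remedy --- shrinking $V$ to $f_0^k(V)$ so that a given $x_0\notin A\cup R$ falls outside $\overline{V}$ --- is circular: the neighborhood $\cU$ for which the trapping inclusion $f(\overline{V})\subset V$ persists degenerates as $V$ shrinks to $A$ (the gap between $f_0(\overline{V})$ and $\partial V$ tends to $0$), so either $\cU$ depends on the point $x_0$ or points close to $A$ but not on it are left untreated. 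Since a non-wandering point of $f$ inside a trapping region must lie in the maximal invariant set $\bigcap_{n\ge 0}f^n(\overline{V})$, what is really needed --- and what the lemma asserts, since $A_f=h_f(A)$ is a specific set --- is that this maximal invariant set equals $A_f$ and not something strictly larger. Nothing in your argument rules out extra invariant dynamics appearing inside $V$ after perturbation.

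This missing step is precisely what the paper's proof supplies, and it uses structure you never invoke: $A_f$ is $cu$-saturated (because $A$ is an attractor of the flow and the leaf conjugacy carries center-unstable leaves to center-unstable leaves), so a neighborhood $U$ of $A_f$ is covered by local stable leaves of points of $A_f$, and one can define $k(z)=\min_{y\in A_f\cap W^s(z,f)}d_s(y,z)$. The uniform backward expansion along $E^s$ forces $k(f^{-n}(z))\to\infty$ for any $z\in U\setminus A_f$ whose backward orbit stays in $U$, a contradiction; hence $\bigcap_{n}f^n(\overline{U})=A_f$ and no non-wandering point survives in $U\setminus A_f$. Your part of the argument (trapping regions plus $C^0$-closeness of $f^n$ to $f_0^n$ on a finite, compactness-uniformized time window, to dispose of the region away from $A\cup R$) is essentially the step the paper dismisses with ``it suffices to show\dots'', and it is fixable modulo the uniformization of $N$ over the compact set $M\setminus(V\cup W)$; but as written the proof omits the actual content of the lemma. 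Your opening caveat about $NW(f_0)\subset A\cup R$ is legitimate but secondary --- that is indeed part of the Bonatti--Parwani--Potrie setup.
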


\begin{proof}

Here we only sketch the proof. Take a small neighborhood $U$ of $A$, by continuiation, for $\cU$ sufficiently small, $A_f\subset U$.
It suffices to show that for every $x\in U\setminus A_f$, there is
$n(x)$ such that $f^{-n(x)}(x)\notin U$.

Note that $A_f$ is $W_f^{cu}$ saturated, take $\epsilon>0$ such that $U\subset \cup_{y\in A_f}W^s_{\epsilon}(y,f)$. For $z\in U$, define function 
$$k(z)=\min_{y\in A_f\cap W^s(z,f)}\{d_s(y,z)\}.$$

Then for any $z\in U\setminus \Lambda_f$, $0<k(z)<\epsilon$. There is $0<\lambda<1$ such that $\|Df|_{E^s(y)}\|<\lambda$ for any $y\in U$, then if $f^{-1}(x)\in U$, we have $k(f^{-1}(x))>\frac{\epsilon}{\lambda}$. And by induction, if suppose $f^{-i}(x)\in U$ for every $i\geq 0$, $k^s(f^{-i}(x))\to \infty$, which is a contradiction.

The proof is complete.

\end{proof}

Because $A_f$ is $cu$ saturated, it is easy to check that the previous arguments for flow case still work, we conclude that $h(f,A_f)$ varies continuously respect to $f\in \cU$. For $R_f$, we will restate the proof by using its center stable leaves.

\end{enumerate}

Let us remark in the end that the method used in this paper for the proof of the continuity of the entropy can be generalized to other partially hyperbolic diffeomorphisms with the dimension of the center equal to one. The fact that we considered perturbations of time-one maps of Anosov flows made the arguments easier, but in fact all that is needed is that the expansion of different unstable disks of points from the non-wandering set are comparable, and this should be true under the assumption of non-expanding center, dynamical coherence, and some kind of `weak' transitivity for example.

\end{document}